\setlist{leftmargin=*} 
\def\fprod#1{\left\langle#1\right\rangle}
\definecolor{mygreen}{RGB}{14,163,78}
\def\ey#1{\textcolor{black}{#1}}
\def\eyh#1{\textcolor{black}{#1}}
\def\aj#1{\textcolor{black}{#1}}
\DeclareMathOperator{\diag}{diag}
\DeclareMathOperator*{\argmin}{\arg\!\min}
\DeclareMathOperator*{\argmax}{\arg\!\max}
\newcommand{\dist}{\mathop{\bf dist{}}}
\def\grad{\nabla}
\def\bI{\mathbf{I}}
\def\bD{\mathbf{D}}
\def\bM{\mathbf{M}}
\def\bR{\mathbf{R}}
\def\cC{\mathcal{C}}
\def\cO{\mathcal{O}}
\def\cP{\mathcal{P}}
\def\cU{\mathcal{U}}
\def\cX{\mathcal{X}}
\def\cY{\mathcal{Y}}
\def\cZ{\mathcal{Z}}
\def\norm#1{\left\|#1\right\|}
\newcommand{\reals}{\mathbb{R}}
\newsavebox{\theorembox}
\newsavebox{\lemmabox}
\newsavebox{\defnbox}
\newsavebox{\corollarybox}
\newsavebox{\propositionbox}
\newsavebox{\remarkbox}
\newsavebox{\assbox}
\savebox{\theorembox}{\noindent\bf Theorem}
\savebox{\lemmabox}{\noindent\bf Lemma}
\savebox{\defnbox}{\noindent\bf Definition}
\savebox{\corollarybox}{\noindent\bf Corollary}
\savebox{\propositionbox}{\noindent\bf Proposition}
\savebox{\remarkbox}{\noindent\bf Remark}
\savebox{\assbox}{\noindent\bf Assumption}
\newtheorem{assumption}{Assumption}
\newtheorem{definition}{Definition}
\newtheorem{theorem}{Theorem}
\newtheorem{lemma}[theorem]{Lemma}
\newtheorem{proposition}[theorem]{Proposition}
\newtheorem{corollary}[theorem]{Corollary}
\theoremstyle{remark}
\newtheorem{remark}{Remark}
\numberwithin{assumption}{section}
\numberwithin{definition}{section}
\numberwithin{theorem}{section}
\numberwithin{remark}{section}
\title{\Large Linear Convergence of a Unified Primal--Dual Algorithm for Convex--Concave Saddle Point Problems with Quadratic Growth}
\author{
  Cody Melcher\thanks{School of Mathematical Sciences, University of Arizona, Tucson, AZ, USA. \texttt{cmelcher@arizona.edu}} 
  \quad
  Afrooz Jalilzadeh\thanks{Department of Systems and Industrial Engineering, University of Arizona, Tucson, AZ, USA. \texttt{\{afrooz, erfany\}@arizona.edu}}
  \quad
  Erfan Yazdandoost Hamedani$^{\dagger}$
}
\date{}
\begin{document}
\maketitle
\begin{abstract} 
In this paper, we study saddle point (SP) problems, focusing on convex-concave optimization involving functions that satisfy either two-sided quadratic functional growth (QFG) or two-sided quadratic gradient growth (QGG)--novel conditions tailored specifically for SP problems as extensions of quadratic growth conditions in minimization. These conditions relax the traditional requirement of strong convexity-strong concavity, thereby encompassing a broader class of problems. We propose a generalized accelerated primal-dual (GAPD) algorithm to solve SP problems with non-bilinear objective functions, unifying and extending existing methods. We prove that our method achieves a linear convergence rate under these relaxed conditions. Additionally, we provide examples of structured SP problems that satisfy either two-sided QFG or QGG, demonstrating the practical applicability and relevance of our approach.
\end{abstract}

\section{Introduction}
Let $(\cX,\|\cdot\|_\cX)$ and $(\cY,\|\cdot\|_{\cY})$  be finite-dimensional normed vector spaces with normed dual spaces denoted by $(\cX^*,\|\cdot\|_{\cX^*})$ and $(\cY^*,\|\cdot\|_{\cY^*})$, respectively.
In this paper, we study the following saddle point (SP) problem:
\begin{align}\label{eq:main-sp}
    \min_{x \in X} \max_{y \in Y} f(x, y),
\end{align}
where $f: \cX \times \cY \to \mathbb{R}$ is convex in $x$ for any $y\in Y$ and concave in $y$ for any $x\in X$; moreover $X \subseteq \cX$ and $Y \subseteq \cY$ are closed, and convex sets. We define the Cartesian product space $\mathcal{Z}\triangleq  \mathcal{X}\times\mathcal{Y} =\{ (x, y)\ |\ x\in \mathcal{X},\ y\in \mathcal{Y}\}$; it follows that $\mathcal{Z}$ is also a finite-dimensional normed vector space. These problems are fundamental in optimization theory, and applications are found in a wide variety of settings, including game theory \cite{bacsar1998dynamic}, distributionally robust learning \cite{yu2022fast}, and generative adversarial networks \cite{goodfellow2014generativeadversarialnetworks,pmlr-v70-arjovsky17a}. Numerous algorithms and approaches, such as extragradient (EG) \cite{korpelevich1976extragradient}, gradient descent-ascent (GDA) \cite{popov1980modification}, optimistic gradient descent-ascent (OGDA) \cite{daskalakis2017training}, and the accelerated primal-dual method (APD) introduced in \cite{hamedani2021primal}, have been developed to tackle these challenges. Primal-dual methods, in particular, stand out for their stability, adaptability, and suitability for large-scale optimization problems. This paper will center on an accelerated primal-dual approach, which achieves faster convergence by incorporating a momentum term to blend insights from prior and current iterates.

For saddle point problems, linear convergence is well-known to be guaranteed when the objective function is strongly convex-strongly concave, along with either smoothness or efficient proximal mappings \cite{jiang2022generalized}. %Recent work by \cite{necoara2019linear} introduced a family of strong convexity relaxations, notably \textit{Quadratic Functional Growth} (QFG) and \textit{Quadratic Gradient Growth} (QGG), that are still able to obtain a linear convergence rate for simple minimization problems. 
{Recent work by \cite{necoara2019linear} introduced several relaxations of strong convexity in the context of minimization problems, notably the \textit{Quadratic Functional Growth} (QFG) and \textit{Quadratic Gradient Growth} (QGG) conditions, both of which guarantee a linear convergence rate. These conditions ensure that the functional values or gradients of the objective grow at least quadratically as they move away from the optimal set. However, extending these relaxations to saddle point problems is a non-trivial challenge and remains largely unexplored.}

{In this paper, we extend the definitions of QFG and QGG to the saddle point problem framework, introducing the notions of two-sided QFG and two-sided QGG (see Definitions \ref{qgg} and \ref{qfg}). Our analysis demonstrates that the proposed accelerated primal-dual algorithm achieves a linear convergence rate when the objective satisfies either two-sided QFG or QGG, paralleling the results observed under strong convexity. Furthermore, we enhance our framework by incorporating Bregman divergence instead of Euclidean distance. This refinement, combined with the two-sided QFG or QGG conditions, allows us to achieve linear convergence under a broader and more flexible set of assumptions.}

\subsection{Related Work}

\noindent \aj{{\bf Minimization Problems.}
Function classes satisfying QFG and QGG have shown to be valuable in both deterministic and stochastic optimization settings.} For QGG in a deterministic setting, \cite{meng2020aug} consider a smooth, convex optimization problem with convex nonlinear inequality constraints. Assuming the objective satisfies QGG and the problem has a unique optimum, the authors achieve a linear convergence rate 
%to a semi-global optimum\footnote{Semi-globally here refers to convergence being dependent on the distance between the initial point and the optimum.} 
by constructing an augmented primal-dual algorithm. \cite{yang2024variance} consider a similar smooth convex optimization problem, but from a stochastic approach, with the added assumption of the objective having a finite sum structure. By utilizing a variance-reduced moving balls approximation method and assuming each $f_i$ satisfies QGG, a linear convergence rate is achieved. There are also several studies incorporating the QFG condition in stochastic regime achieving a sublinear convergence, e.g., \cite{grimmer2019convergence} and \cite{khaled2020better}. 
%Both Grimmer (2019)\cite{grimmer2019convergence} and Khaled and Richtárik (2020) \cite{khaled2020better} include brief discussions on QFG. Grimmer shows that a convergence rate of $\mathcal{O}(1/K)$ for the stochastic projected sub-gradient method is achievable by relaxing the assumption of strong convexity and instead using QFG in non-smooth convex minimization. Khaled and Richtárik focus on stochastic gradient descent (SGD) for unconstrained minimization problems with smooth, potentially non-convex objective functions and smooth expected gradients. They demonstrate that if the objective is expressed as a finite sum, and the average of the component functions satisfies QFG, the individual $f_i$ functions can remain non-convex and smooth, while still achieving a convergence rate of $\mathcal{O}(1/K)$.

\noindent{\bf Saddle Point Problems.}
Saddle point problems have been a central focus of optimization research for decades, driven both by their unique structure, which sparks theoretical interest, and their natural occurrence in applications. 
%Additionally, the flexibility 
%in assumptions that can be applied to these 
%problems—and the corresponding effects on their practical 
%applicability—has fueled extensive research. 

The earliest methods for solving constrained saddle point problems date back to the Arrow-Hurwicz method \cite{arrow1958studies}, also known as gradient descent ascent (GDA). While this method laid the groundwork, its convergence is suboptimal for smooth and strongly-convex-strongly-concave problems, achieving an iteration complexity of \(\cO(\kappa^2 \log(1/\epsilon))\) \cite{nesterov2006solving,faechinei2003finite}. To address these limitations,  \cite{korpelevich1976extragradient} and  \cite{popov1980modification} proposed modifications that introduced extrapolation and optimism, respectively.
Korplevich's extragradient (EG) method \cite{korpelevich1976extragradient} added an extrapolation step to the Arrow-Hurwicz framework, enabling linear convergence for smooth and strongly-convex-strongly-concave functions. This method achieves an iteration complexity of \(\cO(\kappa \log(1/\epsilon))\) \cite{tseng1995linear,azizian2020tight,mokhtari2020a}. Subsequent generalizations include the mirror-prox method \cite{nemirovski2004prox}, which extends to Bregman distances, and variations accommodating unbounded feasible sets and composite objectives \cite{monteiro2011complexity,monteiro2012iteration}.
In parallel, the dual extrapolation method \cite{nesterov2007dual} addressed variational inequalities (VIs) and SP problems by performing extrapolation in the dual space. This approach also achieves linear convergence for smooth, strongly monotone VIs \cite{nesterov2011solving}.
Tseng's splitting method \cite{tseng2000modified} further expanded the field by tackling monotone inclusion problems, demonstrating linear convergence for strongly monotone cases under the Lipschitz continuity assumption of the operator. Extensions utilizing the hybrid proximal extragradient (HPE) framework \cite{monteiro2011complexity} have shown similar efficacy for smooth monotone inclusion problems and convex-concave SP problems.
More recently, Du and Hu \cite{du2018linear} analyzed a bilinear SP problem under C-SC setting with a full column rank coupling matrix, and demonstrated that standard GDA can still achieve a linear convergence in this setting.

Popov's method \cite{popov1980modification}, also known as optimistic gradient descent ascent (OGDA) \cite{daskalakis2017training}, introduced a single-sequence update mechanism, requiring only one gradient computation per iteration. For smooth and strongly-convex-strongly-concave saddle point problems, OGDA achieves an iteration complexity of \(\cO(\kappa \log(1/\epsilon))\) \cite{mokhtari2020b}. By approximating the proximal point method, OGDA also achieves an ergodic complexity of \(\cO(1/\epsilon)\) for unconstrained convex-concave saddle point problems \cite{mokhtari2020b}.
Recent advancements have extended OGDA to Bregman distances, maintaining its linear convergence guarantees for convex-concave and strongly-convex-strongly-concave settings \cite{kotsalis2022simple,jiang2022generalized}. 

Another line of research focuses on a widely-used family of algorithms known as primal-dual hybrid gradient methods, which are tailored to the structure of SP problems by alternately updating the variables \(x\) and \(y\) with the incorporation of momentum terms \cite{chambolle2011first,condat2013primal,chen2014optimal,chambolle2016ergodic,hamedani2021primal}. Initiated by \cite{chambolle2011first}, most of these methods are designed for SP problems with bilinear objective functions, achieving linear convergence rates under the SC-SC setting. A notable exception is the accelerated primal-dual (APD) method \cite{hamedani2021primal}, which extends to general non-bilinear SP problems. The APD method achieves complexities of \(\mathcal{O}(1/\epsilon)\) and \(\mathcal{O}(1/\epsilon^2)\) under C-C and SC-C\footnote{$f(x,\cdot)$ is assumed to be linear.} settings, respectively. 
Finally, \cite{zamani2024convergence} prove a linear convergence rate for standard gradient descent–ascent under a quadratic gradient growth condition for unconstrained saddle point problems in the Euclidean setting.

\noindent\textbf{Variational Inequalities.}
Quadratic growth conditions are well established in the VIs literature, where they are closely linked to error bound analysis and (strong) metric subregularity of monotone operators (see \cite{drusvyatskiy2013tilt}, \cite{drusvyatskiy2015quadratic}, \cite{artacho2008characterization}).
This connection has been used to prove linear convergence of proximal methods in unconstrained minimization problems \cite{drusvyatskiy2018error}, and has also been employed in the context of primal-dual methods for affine-coupled composite minimization and in the analysis of the alternating direction method of multipliers (ADMM) for linearly constrained problems (Ch. 8, \cite{Jin2025-go}). Both of these settings, however, represent special cases of the broader smooth convex-concave saddle-point framework we focus on. While any convex–concave saddle-point problem can be reformulated as a VI, existing analyses in that framework typically operate through monotone operator theory and strong monotonicity assumptions. In contrast, our approach works directly with the saddle-point objective and relies on a new two-sided QGG and QFG definitions, which are strictly weaker than strong monotonicity, but still ensure linear convergence.

\paragraph{Contribution}
%To the best of our knowledge, no research has yet relaxed the strong convexity assumption to either QFG or QGG in the min-max setting. In this paper, Necoara’s definitions of QFG and QGG are extended to the saddle-point context, along with the introduction of a generalized Accelerated Primal-Dual (APD) algorithm tailored for saddle-point problems under either two-sided QFG or QGG assumptions. Linear convergence of this algorithm is demonstrated with constant step sizes and momentum, allowing for a performance comparison with other pre-existing first-order methods.
We propose a Generalized Accelerated Primal-Dual (GAPD) algorithm to address constrained SP problems with non-bilinear objective functions, unifying and extending existing methods such as APD \cite{hamedani2021primal} and OGDA \cite{mokhtari2020b}. Assuming a convex-concave and smooth objective function, our approach fills a critical gap in the literature by identifying conditions for achieving a linear convergence rate {for Problem \ref{eq:main-sp}} without requiring strong convexity-strong concavity. Specifically, we establish that a linear convergence to the SP solution set can be achieved under either of two weaker conditions: two-sided Quadratic Functional Growth (QFG) or two-sided Quadratic Gradient Growth (QGG). Moreover, the proposed method accommodates Bregman distances, significantly enhancing its flexibility and applicability. To further validate our contributions, we provide examples of structured SP problem classes that satisfy these relaxed growth conditions.

\section{Assumptions and Definitions}
This section outlines our notations, the key assumptions, and the definitions used in the analysis. First, we fix the notations used in the paper.
\paragraph{Notation} Throughout the paper, we reserve uppercase calligraphic letters $\mathcal{U}, \mathcal{X}, \mathcal{Y}, \mathcal{Z}$ to denote finite-dimensional normed vector spaces, while the corresponding uppercase Roman letters $U, X, Y, Z$ represent subsets of these spaces.
For any finite-dimensional real normed space $\cU$, we denote its dual space by $\cU^*$. We denote the norms on $\cU$ and $\cU^*$ by $\|\cdot\|_\cU$ and $\|\cdot\|_{\cU^*}$, respectively. In addition, we denote the duality pairing between $\cU^*$ and $\cU$ by $\langle \cdot, \cdot \rangle : \cU^* \times \cU \to \mathbb{R}$. 
Further, $\norm{\cdot}_2$ denotes either the Euclidean norm (for vectors) or the spectral norm (for matrices). Given a convex function $g:\reals^n\rightarrow\reals\cup\{+\infty\}$, its convex conjugate is defined as $g^*(w)\triangleq\sup_{\theta\in\reals^n}\fprod{w,\theta}-g(\theta)$. 
$\mathbb{R}^n_{++}$ ($\mathbb{R}^n_{+}$) and $\mathbb{S}^n_{++}$ ($\mathbb{S}^n_{+}$) denote the positive (nonnegative) orthant and the set of positive (semi)definite matrices, respectively, moreover, $\bI_n$ is the $n\times n$ identity matrix. For a block diagonal matrix $A=\diag(\{a_i\bI_{n_i}\})\in\mathbb{S}^{n}_{++}$ where $\{a_i\}_{i=1}^m\subset\reals_{++}$ and $n=\sum_{i=1}^m n_i$, $Q$-inner product and $Q$-norm are defined as $\fprod{z,\bar{z}}_Q=\sum_{i=1}^m a_i\fprod{z_i,\bar{z_i}}$ and $\norm{z}^2_Q\triangleq \sum_{i=1}^m a_i\norm{z_i}^2$.
%For a given matrix $Q$, square root of a matrix is $Q^{\frac{1}{2}}$ when $Q^{\frac{1}{2}}Q^{\frac{1}{2}}=Q$.
%Given $Q\in\sy^n_{+}$, $\lambda_{\min}^+(W)$ denotes the smallest positive eigenvalue of $Q$. $\Pi$ denotes the Cartesian product. 
Finally, for $\theta\in\reals^n$, we adopt $[\theta]_+\in\reals^n_{+}$ to denote $\max\{\theta, \textbf{0}\}$ where max is computed componentwise. 
%$g(n)=\Theta(f(n))$ means $\exists n_0\in\integers_+$ and $c_1,c_2>0$ such that $c_1 f(n)\leq g(n)\leq c_2 f(n)$ for $n\geq n_0.$

Now, we present the assumptions regarding the objective function.
% \begin{assumption}[Convex-Concave Structure] \label{assumption:convex-concave-structure}
% The function \( f: \cX \times \cY \to \mathbb{R} \) has {\color{blue} both of }the following properties:
% \begin{itemize}
%     \item \textbf{Convexity in \(x\):} For any fixed \(y \in \cY\), {\color{blue}the function \( f(\cdot, y) \)} is convex on \( \cX \). That is, for all \( x, \bar{x} \in \cX \) and any \( y \in \cY \),
%     \[
%    {\color{blue} f(x, y) \geq f(\bar{x}, y) + \langle \nabla_x f(\bar{x}, y), x - \bar{x} \rangle.}
%     \]
%     \item \textbf{Concavity in \(y\):} For any fixed \(x \in \cX\), {\color{blue}the function \( f(x, \cdot) \)} is concave on \( \cY \). That is, for all \( y, \bar{y} \in \cY \) and any \( x \in \cX \),
%     \[
%    {\color{blue} f(x, y) \leq f(x, \bar{y}) + \langle \nabla_y f(x, \bar{y}), y - \bar{y} \rangle.}
%     \]
% \end{itemize}
% \end{assumption}

% \begin{assumption}[Smoothness] \label{assumption:smoothness}
% The gradient of the function \(f:\cX \times\cY \to \mathbb{R} \) is Lipschitz continuous with respect to both \(x\) and \(y\). Specifically, there exist constants \( L_{xx}, L_{xy}, L_{yx}, L_{yy} \geq 0 \) such that for all \( x, \bar{x} \in \cX \) and \( y, \bar{y} \in \cY \),
% \begin{align}
%     \|\nabla_x f(x, y) - \nabla_x f(\bar{x}, \bar{y})\|_{\cX^*} &\leq L_{xx} \|x - \bar{x}\|_\cX + L_{xy} \|y - \bar{y}\|_\cY, \\
%     \|\nabla_y f(x, y) - \nabla_y f(\bar{x}, \bar{y})\|_{\cY^*} &\leq L_{yy} \|y - \bar{y}\|_\cY + L_{yx} \|x - \bar{x}\|_\cX.
% \end{align}
% \end{assumption}
\begin{assumption}[Convex-Concave Structure and Smoothness] \label{assumption:convex-concave-smoothness}
The function $f:\cX\times\cY\to\mathbb{R}$ is continuously differentiable on an open set containing $X\times Y$ and satisfies the following,
\begin{enumerate}[(i)]
    \item %\textbf{Convexity and Smoothness in $x$:} 
    For any $y\in\cY$, $f(\cdot,y)$ is convex, and there exist $L_{xx} \ge 0$ and $L_{xy} \ge 0$ such that for all $x,\bar{x}\in\cX$ and $y,\bar{y}\in\cY$,
    \begin{equation} \label{eq:Lip-x}
        \|\nabla_x f(x, y) - \nabla_x f(\bar{x}, \bar{y})\|_{\cX^*} \;\le\; L_{xx} \|x - \bar{x}\|_\cX + L_{xy} \|y - \bar{y}\|_\cY.
    \end{equation}

    \item %\textbf{Concavity and Smoothness in $y$:} 
    For any $x\in\cX$, $f(x,\cdot)$ is concave, and there exist $L_{yy} \ge 0$ and $L_{yx} > 0$ such that for all $x,\bar{x}\in\cX$ and $y,\bar{y}\in\cY$,
    \begin{equation} \label{eq:Lip-y}
        \|\nabla_y f(x, y) - \nabla_y f(\bar{x}, \bar{y})\|_{\cY^*} \;\le\; L_{yy} \|y - \bar{y}\|_\cY + L_{yx} \|x - \bar{x}\|_\cX.
    \end{equation}
\end{enumerate}
\end{assumption}
Note that the subscripts on the Lipschitz constants only specify with which variable the derivative of the gradient was taken; they do not indicate any sort of functional dependence. For example, $L_{xx}$ is the Lipschitz constant associated with the partial derivative with respect to $x$ of the gradient's $x$ component. 
Moreover, \eqref{eq:Lip-x} and convexity imply that for any $y\in \mathcal Y$, and $x,\bar{x}\in\mathcal X$,
\begin{equation}\label{eq:lip-x-extend}
0\leq f(x,y)-f(\bar{x},y)-\fprod{\nabla_x f(\bar{x},y),x-\bar{x}}\leq \frac{L_{xx}}{2}\|x-\bar{x}\|_{\mathcal X}^2,
\end{equation}
and similarly \eqref{eq:Lip-y} and concavity imply that 
\begin{equation}\label{eq:lip-y-extend}
0\geq f(x,y)-f(x,\bar{y})-\fprod{\nabla_y f(x,\bar{y}),y-\bar{y}}\geq -\frac{L_{yy}}{2}\|y-\bar{y}\|_{\mathcal Y}^2.
\end{equation}
%\todo{combined Assump1 + 2. do we need to write the convex/concave definition explicitly? I feel like we can drop..Further, we could maybe call it "Curvature and Smoothness? }

    % \item \textbf{Quadratic Bound} The function $f$ is bounded below quadratically for both $x$ and $y$ with associated constant,
    % \begin{align*}
    % f(x)-f(\bar{x})\geq \frac{\mu_x}{2} \norm{\bar{x}-x}^2, \forall x\in X\\
    % f(y)-f(\bar{y})\geq \frac{\mu_y}{2} \norm{\bar{y}-y}^2, \forall y\in Y
    % \end{align*}
    % where $\bar{x}, \bar{y}$ are the projections of the point onto the associated optimal set.
%\subsection{Definitions}

\aj{One of the contributions of this work is the use of Bregman distance, which generalizes the Euclidean distance, offering greater flexibility and broader applicability in the analysis. Next, we define the concepts of Bregman distance and Bregman projection.}
\begin{definition}[Bregman Distance]\label{def-bregman}
Let $\varphi: \cU \to \mathbb{R}$ be a differentiable, 1-strongly convex function on the domain $\cU$. The \textit{Bregman distance} $\bD_\cU: \cU \times \cU \to \mathbb{R}_+$ associated with $\varphi$ is defined as:
\[
\bD_\cU(z, \bar{z}) \triangleq \varphi(z) - \varphi(\bar{z}) - \langle \nabla \varphi(\bar{z}), z - \bar{z} \rangle,
\]
where $z, \bar{z} \in \cU$. The Bregman distance is non-negative and equals zero if and only if $z = \bar{z}$. Moreover, if $\cU= \prod_{i=1}^\ell \cU_i$, where $\cU_i$ is a finite-dimensional ($n_i$) normed vector space and therefore $\mathcal{U}$ is also a finite-dimensional normed vector space, and $A=\diag(\{a_i\bI_{n_i}\})$ for some $\{a_i\}_{i=1}^\ell\subset \reals_{++}$, then we define $\bD_\cU^A(u,\bar{u})\triangleq \sum_{i=1}^\ell a_i\bD_{\cU_i}(u_i,\bar{u}_i)$. 
\end{definition}

\begin{remark}\label{bregineq} By definition,  $\bD_\cU(z, \bar{z}) \geq \frac{1}{2} \| z - \bar{z} \|^2_\cU$ for $z \in \cU$ and $\bar{z} \in \operatorname{dom} f$. Further, if we choose $\varphi$ to be $\frac{1}{2}\|\cdot\|^2$, then the Bregman distance becomes the Euclidean distance.\end{remark}

\begin{definition}[Bregman Projection]\label{def-bregman-projection}
We define the Bregman projection of a point $\bar{u} \in \cU$ onto a convex and closed set $C \subseteq \cU$ as:
\[
\cP_{\cC}(\bar{u}) \triangleq \argmin_{u \in C} \bD_\cU(u, \bar{u}).
\]
Moreover, we define $\dist(u,C)\triangleq \bD_\cU(\cP_{C}(\bar{u}), \bar{u})$. 
%where $\bD_\cU(u, \bar{u})$ is the Bregman distance associated with $\varphi$, as defined in Definition~\ref{def-bregman}.
\end{definition}

We present the definitions of Quadratic Gradient Growth (QGG) and Quadratic Functional Growth (QFG), following \cite{necoara2019linear}, with an extension that incorporates Bregman distance functions.

\begin{definition}[QGG]\label{nec-qgg}
Consider a minimization problem $\min_{x\in X}g(x)$ where $g:\cX\to\reals$ is a
continuously differentiable function. Let $X^*$ denote the set of minimizers of $g$. Then we say that \( g \) satisfies \textit{quadratic gradient growth} relative to the set \( X^* \) if there exists a constant \( \kappa_g > 0 \) such that for any \( x \in X \) and \( \bar{x} = \cP_{X^*}(x)\)  we have:
\[
\langle \nabla g(x) - \nabla g(\bar{x}), x - \bar{x} \rangle \geq 2\kappa_g \bD_\cX(x,\bar{x})
\]
%where $\bar{x}, \bar{y}$ are projections of the respective points onto the optimal sets.
\end{definition}

\begin{definition}[QFG]\label{nec-qfg}
Consider a minimization problem $\min_{x\in X}g(x)$ where $g:\cX\to\reals$ is a
continuously differentiable function. Let $X^*$ denote the set of minimizers of $g$. Then we say that \( g \) satisfies \textit{quadratic functional growth} relative to the set \( X^* \) if there exists a constant \( \kappa_g > 0 \) such that for any \( x \in X \) and \( \bar{x} = \cP_{X^*}(x)\)  we have:
\[ 
g(x)-g^* \geq \kappa_g \bD_\cX(x,\bar{x}).
\]
%where $\bar{x}, \bar{y}$ are projections of the respective points onto the optimal sets.
\end{definition}

\begin{definition}[Saddle Point Solution]\label{optset}
For a convex-concave function $f$ in \eqref{eq:main-sp}, the SP solution set \( Z^* \subseteq \mathcal{Z} := \mathcal{X} \times \mathcal{Y} \) is defined as
%Consider a general saddle point problem in the form of Problem \ref{eq:main-sp}. Let \( f : \mathcal{X} \times \mathcal{Y} \to \mathbb{R} \) be convex in \( x \in X \) for each fixed \( y \in Y \), and concave in \( y \in Y\) for each fixed \( x \in X \). Assume that \(X\subseteq \mathcal{X}\) and \(Y\subseteq \mathcal{Y}\) are nonempty, closed, and convex sets. Then we define the saddle solution set \( Z^* \subseteq \mathcal{Z} := \mathcal{X} \times \mathcal{Y} \) as
\[
Z^* := \left\{ (x^*, y^*) \in \mathcal{X} \times \mathcal{Y} : f(x^*, y) \leq f(x^*, y^*) \leq f(x, y^*), \;\; \forall (x, y) \in \mathcal{X} \times \mathcal{Y} \right\}.
\]
\end{definition}
Based on Sion's theorem \cite{sion1958general} and Proposition 3.4.1 in \cite{bertsekas2009convex}, when $f(\cdot,\cdot)$ is convex-concave and $Y$ is compact, one can represent the saddle point solution set as $Z^*=X^*\times Y^*$ where $X^*\triangleq \argmin_{x\in X}\max_{y\in Y}f(x,y)$ and $Y^*\triangleq \argmax_{y\in Y}\min_{x\in X}f(x,y)$.
%Note if we assume the function $f$ is strongly convex in $X$,and  strongly concave in $Y$, with both those sets also being closed and convex, the saddle solution set is guaranteed to be a singleton.
We now define \textit{two-sided} QGG and QFG conditions designed for our saddle-point setting.
\begin{definition}[Two-Sided QGG]\label{qgg}
Consider problem \eqref{eq:main-sp} and let $Z^*$ denote the set of saddle point solutions. 
%A continuously differentiable function  
Then, \( f(x, y) \) has a \textit{two-sided quadratic gradient growth} on $Z^*$ if there exist constants \((\mu_x,\mu_y)\in\reals^2_{++}\)
such that for any \( x \in X \) and \( y \in Y \), the following inequality holds:
\begin{equation}\label{eq:qgg}
    \langle F(z)-F(\bar{z}), z - \bar{z} \rangle \geq 2\bD_{\cZ}^{\bM}(z,\bar{z}),
\end{equation}
where $z\triangleq [x^\top \ y^\top]^\top$, $\bar{z}\triangleq \cP_{Z^*}(z)$, $F(z)\triangleq [\grad_x f(x,y)^\top \ -\!\grad_y f(x,y)^\top]^\top
$, and $\bM\triangleq \diag(\{\mu_x\bI_n,\mu_y\bI_m\})$. 
%This inequality can also be expressed as:
%\[
%\langle \nabla_x f(x, y) - \nabla_x f(\bar{x}, \bar{y}), x - \bar{x} \rangle \geq \kappa_f^x \| x - \bar{x} \|^2 \quad \text{and} \quad \langle \nabla_y f(x, y) - \nabla_y f(\bar{x}, \bar{y}), y - \bar{y} \rangle \leq -\kappa_f^y \| y - \bar{y} \|^2.
%\]%
\end{definition}
%\aj{Next, we define the concept of two-sided Quadratic Functional Growth (QFG) and then discuss its relationship QGG.}
\begin{definition}[Two-Sided QFG]\label{qfg}
Consider problem \eqref{eq:main-sp} and let $Z^*$ denote the set of saddle point solutions. 
%A continuously differentiable function  
Then, \( f(x, y) \) has a \textit{two-sided quadratic functional growth} on $Z^*$ if there exist constants \((\mu_x,\mu_y)\in\reals^2_{++}\)
such that for any \( x \in X \) and \( y \in Y \), the following inequality holds:
\begin{equation}\label{eq:qfg}
f(x,\bar{y}) - f(\bar{x},y) \geq \bD_{\cZ}^{\bM}(z,\bar{z}).
\end{equation}
%where $z\triangleq [x^\top y^\top]^\top$, $\bar{z}\triangleq \cP_{Z^*}(z)$, $F(z)\triangleq [\grad_x f(x,y)^\top -\grad_y f(x,y)^\top]^\top$, and $\bM\triangleq \diag(\{\mu_x\bI_n,\mu_y\bI_m\})$. 
%This inequality can also be expressed as:
%\[
%\langle \nabla_x f(x, y) - \nabla_x f(\bar{x}, \bar{y}), x - \bar{x} \rangle \geq \kappa_f^x \| x - \bar{x} \|^2 \quad \text{and} \quad \langle \nabla_y f(x, y) - \nabla_y f(\bar{x}, \bar{y}), y - \bar{y} \rangle \leq -\kappa_f^y \| y - \bar{y} \|^2.
%\]%
\end{definition}

\begin{remark}
  As shown in \cite{necoara2019linear}, the QFG condition in Definition \ref{nec-qfg} for minimization problems is weaker than the QGG condition outlined in Definition \ref{nec-qgg} in the Euclidean setting.
  However, this relationship does not necessarily hold for two-sided definitions with Bregman distance. Specifically, the two-sided growth conditions in Definitions \ref{qgg} and \ref{qfg} do not imply one another without additional assumptions, as we show in the following examples. This distinction underscores the nontrivial extension of quadratic growth conditions from minimization problems to saddle point problems.
\end{remark}

First, we demonstrate that two-sided QFG does not imply two-sided QGG. Then, through a specific example, we show that two-sided QGG does not imply two-sided QFG, using a carefully chosen Bregman distance function.\\ 
%Later in Theorem \ref{thm:2QGG to 2QFG}\todo{i think we need to remove this now} we show that if the gradient of the distance generating function is Lipschitz continuous, then two-sided QGG implies two-sided QFG. 

\noindent \textbf{Example 1.} Consider the following SP problem in the Euclidean space, 
  \begin{equation*}
\min_{x\in[0,1]^2} \max_{y\in[0,1]^2} \frac{1}{2}(x_1^2 + x_2^2) +(x_1+x_2)y_1 - y_1^2 +y_2. 
  \end{equation*}
One can verify that the problem has a unique saddle point at $(x_1^*,x_2^*)=(0,0),(y_1^*,y_2^*)=(0,1)$. Recall  from Definition \ref{qgg} that $(\bar{x},\bar{y})$ denotes the projection of any point $(x,y)$ onto the solution set, hence, $(\bar{x},\bar{y})=({x}^*,{y}^*)$. 
Now, we show that $f$ satisfies two-sided QFG with respect to $\frac{1}{2}\|\cdot\|_2^2$.  
%It can be verified that this point satisfies the saddle inequality for $f$ (defined in Definition \ref{optset}). This constrained function and point satisfy two-sided QFG with constant $\frac{1}{2}$:
\begin{align*}
f(x, \bar{y}) - f(\bar{x}, y) 
&= \tfrac{1}{2}(x_1^2 + x_2^2) + 1 + y_1^2 - y_2 \\
&= \tfrac{1}{2}(x_1^2 + x_2^2) + (1 + y_1^2 - y_2) \\
&\geq \tfrac{1}{2}(x_1^2 + x_2^2) + \tfrac{1}{2}(y_1^2 + (y_2 - 1)^2)
= \frac{1}{2} \|x-\bar{x}\|_\mathcal{X}^2 + \frac{1}{2} \|y-\bar{y}\|_\mathcal{Y}^2,
\end{align*}
where the inequality follows from the fact that $y_1^2\geq \frac{1}{2}y_1^2$ and $1-y_2\geq \frac{1}{2}(y_2-1)^2$ for any $y_1,y_2\in[0,1]$. 
To show this example does not satisfy two-sided QGG, consider the point $z=(x,y)=\big((0,0),(0,\frac{1}{2})\big)$ and $\bar{z}=z^*$, then we observe that for any $\mu_x,\mu_y>0$,  
\begin{align*}
    \langle F(z)-F(\bar{z}),z-\bar{z}\rangle=0 \not\geq 2\bD^\bM_\mathcal{Z}(z,\bar{z})= \mu_x\|x-\bar{x}\|^2_2+2\mu_y\|y-\bar{y}\|^2_2 =\frac{\mu_y}{2}.\qed 
\end{align*}
%\noindent \textbf{Example 2.} 
%Consider the negative Entropy function defined as $\psi(x)=x\log(x)-x$ if $x>0$, $0$ if $x=0$, and $\infty$ otherwise, with the associated Bregman distance function $\bD(x,\bar{x})=x(\log(x)-\log(\bar{x}))-x+\bar{x}$ if $\bar{x}>0$, $\bar{x}$ if $\bar{x}>0$ and $x=0$, and $\infty$ otherwise. Now consider the following minimization problem: $\min_{x}\psi(x)+h(x)$, with a minimizer $x^*$. We observe that this function satisfies two-sided QGG w.r.t. the negative Entropy Bregman distance as follows:

In the following example, we show that the two-sided QGG property defined in Definition \ref{qgg} does not imply the two-sided QFG property defined in Definition \ref{qfg}. This is due to the use of an arbitrary Bregman distance function in the definition: with general Bregman distance functions, QGG does not necessarily imply QFG because the geometry affects the lower bounds for gradient monotonicity and suboptimality differently. 
To illustrate this, we present a counterexample for a minimization problem using a specific Bregman distance function (the argument can be extended to saddle-point problems). This example does not contradict \cite[Theorem 4]{necoara2019linear}, as that result uses the Euclidean distance in its proof. Later, in Theorem \ref{thm:2QGG to 2QFG}, we show that if the gradients of the distance-generating functions are Lipschitz continuous, then two-sided QGG indeed implies two-sided QFG.\\

\noindent\textbf{Example 2.} 
Let $h:[0,\infty) \to \reals$ be defined as $h(x) = \frac{1}{4} \big( x^2 + 2\,\mathrm{Ei}(x^2) - 4\log(x) \big) \quad \text{if } x>0$, and $h(0) = \gamma/2$, where $\gamma$ is Euler’s constant and $\mathrm{Ei}(x) = \int_{-\infty}^x \frac{\exp(t)}{t} \, dt$ is the exponential integral. 
The function $h$ is convex and increasing, so its minimum is attained at the unique point $x^* = 0$. Moreover, $  h'(x) = \frac{x}{2} + \frac{\exp(x^2)-1}{x},$ with $h'(0) = 0$, showing that $h(\cdot)$ is continuously differentiable. 
Let the Bregman distance-generating function be $\psi(x) = \frac{1}{2}x^2 + \exp(x^2)$. Then, one can verify that $\bD(x,\bar{x}) = \frac{1}{2}x^2 + \exp(x^2) - 1$, where $\bar{x} = \mathcal{P}_{X^*}(x) = 0$. 
With these definitions, $h$ satisfies QGG since for any $x \in [0,\infty)$:  
$ (h'(x) - h'(\bar{x}))(x - \bar{x}) = h'(x) x = \frac{x^2}{2} + \exp(x^2) - 1 = \bD(x,\bar{x}).$ 
However, QFG does not hold. By L'Hopital's rule:  
$$
\lim_{x \to \infty} \frac{h(x) - h(\bar{x})}{\bD(x,\bar{x})}  
= \lim_{x \to \infty} \frac{\frac{x}{2} + \frac{\exp(x^2)-1}{x}}{x + 2x\exp(x^2)} = 0.
$$
Therefore, for any $\mu > 0$, there exists $x > 0$ such that  
$  h(x) - h(\bar{x}) < \mu \bD(x,\bar{x})$, which violates the QFG property. \qed

\begin{theorem}\label{thm:2QGG to 2QFG}
Suppose Assumption \ref{assumption:convex-concave-smoothness} holds and the Bregman distance generating functions $\psi_\cX$ and $\psi_\cY$ has Lipschitz continuous gradient with constants $L_{\psi_\cX}$ and $L_{\psi_\cY}$, respectively. 
If $f$ satisfies the two-sided QGG in \eqref{eq:qgg} with constant $(\mu_x,\mu_y)$, then it satisfies the two-sided QFG in \eqref{eq:qfg} with constant $(\mu_x/L_{\psi_\cX},\mu_y/L_{\psi_\cY})$. 
\end{theorem}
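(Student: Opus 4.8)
The plan is to extend the segment-integration argument of \cite{necoara2019linear} (used there to derive QFG from QGG in the minimization setting, their Theorem~4) to the two-sided saddle structure and a general Bregman geometry. Write the gap as $f(x,\bar y)-f(\bar x, y)=\big[f(x,\bar y)-f(\bar x,\bar y)\big]+\big[f(\bar x,\bar y)-f(\bar x, y)\big]$; since $\bar z=(\bar x,\bar y)\in Z^*$ is a saddle point, both brackets are nonnegative, and I lower bound each by a squared Euclidean distance and only then convert back to Bregman distance. Throughout I use that $\psi_\cX$ is $1$-strongly convex, so $\bD_\cX(u,v)\ge\tfrac12\|u-v\|_\cX^2$ (Remark~\ref{bregineq}), and that $\nabla\psi_\cX$ is $L_{\psi_\cX}$-Lipschitz, so conversely $\bD_\cX(u,v)\le\tfrac{L_{\psi_\cX}}{2}\|u-v\|_\cX^2$, with the analogous bounds for $\psi_\cY$; I also use that $(\bar x,\bar y)\in Z^*$ yields the first-order optimality relations $\langle\nabla_x f(\bar x,\bar y),x-\bar x\rangle\ge0$ and $\langle\nabla_y f(\bar x,\bar y),y-\bar y\rangle\le0$ for all $x\in X$, $y\in Y$.

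For the first bracket, set $x_t:=\bar x+t(x-\bar x)\in X$ for $t\in[0,1]$, and write $f(x,\bar y)-f(\bar x,\bar y)=\int_0^1\langle\nabla_x f(x_t,\bar y),x-\bar x\rangle\,dt$. Split $\langle\nabla_x f(x_t,\bar y),x-\bar x\rangle=\langle\nabla_x f(x_t,\bar y)-\nabla_x f(\bar x,\bar y),x-\bar x\rangle+\langle\nabla_x f(\bar x,\bar y),x-\bar x\rangle$, where the last term is nonnegative by first-order optimality. For the first term, apply two-sided QGG \eqref{eq:qgg} at $z_t:=(x_t,\bar y)$: since $\bar y\in Y^*$ is its own projection, the projection of $z_t$ onto $Z^*=X^*\times Y^*$ is $(\cP_{X^*}(x_t),\bar y)$, so, provided $\cP_{X^*}(x_t)=\bar x$, the $y$-block of \eqref{eq:qgg} vanishes and QGG reduces to $\langle\nabla_x f(x_t,\bar y)-\nabla_x f(\bar x,\bar y),x_t-\bar x\rangle\ge 2\mu_x\bD_\cX(x_t,\bar x)\ge\mu_x\|x_t-\bar x\|_\cX^2=\mu_x t^2\|x-\bar x\|_\cX^2$. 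Using $x-\bar x=\tfrac1t(x_t-\bar x)$ and integrating, $f(x,\bar y)-f(\bar x,\bar y)\ge\int_0^1\mu_x t\,\|x-\bar x\|_\cX^2\,dt=\tfrac{\mu_x}{2}\|x-\bar x\|_\cX^2$. The mirror computation along $y_t:=\bar y+t(y-\bar y)$, using concavity in $y$ and the vanishing of the $x$-block of \eqref{eq:qgg}, gives $f(\bar x,\bar y)-f(\bar x, y)\ge\tfrac{\mu_y}{2}\|y-\bar y\|_\cY^2$.

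Adding the two estimates gives $f(x,\bar y)-f(\bar x, y)\ge\tfrac{\mu_x}{2}\|x-\bar x\|_\cX^2+\tfrac{\mu_y}{2}\|y-\bar y\|_\cY^2$; and since $\tfrac{\mu_x}{L_{\psi_\cX}}\bD_\cX(x,\bar x)\le\tfrac{\mu_x}{L_{\psi_\cX}}\cdot\tfrac{L_{\psi_\cX}}{2}\|x-\bar x\|_\cX^2=\tfrac{\mu_x}{2}\|x-\bar x\|_\cX^2$ (and the analogue in $y$), the right-hand side dominates $\tfrac{\mu_x}{L_{\psi_\cX}}\bD_\cX(x,\bar x)+\tfrac{\mu_y}{L_{\psi_\cY}}\bD_\cY(y,\bar y)$, which is exactly the two-sided QFG inequality \eqref{eq:qfg} with constants $(\mu_x/L_{\psi_\cX},\mu_y/L_{\psi_\cY})$.

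The delicate point, and the step I expect to be the main obstacle, is justifying $\cP_{X^*}(x_t)=\bar x$ for every $t\in[0,1]$ (and the analogue for $y_t$), which is what legitimizes invoking two-sided QGG at the intermediate points. With the Euclidean distance this is automatic, since the metric projection onto a convex set is constant along the segment from a point to its projection. With a general Bregman distance the projection may move along a Euclidean segment, so I would handle it either by (i) restricting to the case of a singleton $Z^*$, the setting of both motivating examples and of most applications, where the claim is immediate, or by (ii) running the integration along the mirror path $x_t:=\nabla\psi_\cX^*\big(\nabla\psi_\cX(\bar x)+t(\nabla\psi_\cX(x)-\nabla\psi_\cX(\bar x))\big)$, along which the Bregman-projection optimality condition $\langle\nabla\psi_\cX(x_t)-\nabla\psi_\cX(\bar x),x'-\bar x\rangle=t\,\langle\nabla\psi_\cX(x)-\nabla\psi_\cX(\bar x),x'-\bar x\rangle\le0$ holds for every $x'\in X^*$ and hence $\cP_{X^*}(x_t)=\bar x$; the rest of the estimate then carries over with a little more bookkeeping.
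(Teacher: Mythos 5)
Your proof is essentially identical to the paper's: the same segment integration $x_t=\bar x+t(x-\bar x)$, the same splitting of the integrand so that two-sided QGG can be applied at the intermediate points $(x_t,\bar y)$ (and $(\bar x,y_t)$), and the same two-stage conversion between Euclidean and Bregman quantities via $1$-strong convexity of $\psi$ and then $L_{\psi_\cX}$-, $L_{\psi_\cY}$-Lipschitz continuity of $\nabla\psi$. The ``delicate point'' you flag --- that $\cP_{X^*}(x_t)=\bar x$ for all $t\in[0,1]$, which is automatic for Euclidean projections but not for general Bregman projections --- is asserted without justification in the paper's own proof, so your proposed remedies (restricting to singleton $Z^*$, or integrating along the mirror path where the Bregman-projection optimality condition is preserved) identify and address a real gap rather than deviating from the argument.
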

\begin{proof}
Suppose $f$ satisfies two-sided QGG in \eqref{eq:qgg} and $\bar{z}=[\bar{x}^\top~\bar{y}^\top]^\top\in Z^*$ be a saddle point solution. Using the fundamental theorem of calculus for first-order Taylor approximation of the function $f(\cdot,\bar{y})$ in the integral form, we have
\begin{align*}
f(x,\bar{y})&=f(\bar{x},\bar{y})+\int_{0}^1 \fprod{\nabla_x f(\bar{x}+t(x-\bar{x}),\bar{y}),x-\bar{x}} dt\\
&=f(\bar{x},\bar{y})+\fprod{\nabla_x f(\bar{x},\bar{y}),x-\bar{x}}\\
&\quad +\int_{0}^1 \frac{1}{t}\fprod{\nabla_x f(\bar{x}+t(x-\bar{x}),\bar{y})-\nabla_x f(\bar{x},\bar{y}),t(x-\bar{x})} dt\\
&\overset{(a)}{\geq} f(\bar{x},\bar{y})+\fprod{\nabla_x f(\bar{x},\bar{y}),x-\bar{x}}+\int_{0}^1 \frac{2\mu_x}{t}\bD_\cX(\bar{x}+t(x-\bar{x}),\bar{x}) ~ dt\\
&\overset{(b)}{\geq} f(\bar{x},\bar{y})+\fprod{\nabla_x f(\bar{x},\bar{y}),x-\bar{x}}+ \frac{\mu_x}{2}\|x-\bar{x}\|^2_\cX\\
&\overset{(c)}{\geq} f(\bar{x},\bar{y})+\fprod{\nabla_x f(\bar{x},\bar{y}),x-\bar{x}}+ \frac{\mu_x}{L_{\psi_\cX}}\bD_\cX(x,\bar{x}),
%&\overset{(b)}{\geq} f(\bar{x},\bar{y})+2\mu_x\bD_\cX(\bar{x}+t(x-\bar{x}),\bar{x}),
\end{align*}
where inequality $(a)$ follows from \eqref{eq:qgg} for the point $z=(\bar{x}+t(x-\bar{x}),\bar{y})$ and noting that $\cP_{X^*}(\bar{x}+t(x-\bar{x}))=\bar{x}$ and $\cP_{Y^*}(\bar{y})=\bar{y}$. Moreover, inequalities $(b)$ and $(c)$ follow from the assumption and 1-strong convexity of the Bregman distance function. Similarly, we can show that 
\begin{equation*}
-f(\bar{x},y)\geq -f(\bar{x},\bar{y}) + \fprod{\nabla_y f(\bar{x},\bar{y}),\bar{y}-y}+\frac{\mu_y}{L_{\psi_\cY}}\bD_\cY(y,\bar{y}).
\end{equation*}
Adding up the above two inequalities and using the fact that $(\bar{x},\bar{y})$ is an SP solution, i.e., $\fprod{\nabla_x f(\bar{x},\bar{y}),x-\bar{x}}+\fprod{\nabla_y f(\bar{x},\bar{y}),\bar{y}-y}$, we conclude that $f$ satisfies two-sided QFG with constant $(\frac{\mu_x}{L_{\psi_\cX}},\frac{\mu_y}{L_{\psi_\cY}})$. 
\end{proof}
%\end{comment}
% \begin{corollary}[QGG: Functional Bounds] Suppose $f$ is continuoulsy differentiable, convex in $X$, concave in $Y$, and $\bar{x},\bar{y}$ are the projections of points onto their respective optimal set. Further suppose $f$ satisfies QGG and the Quadratic Bound, then it follows that 
% \begin{align*}
% <\begin{pmatrix}
% \nabla_x f(x,y) \\
% -\nabla_y f(x,y)
% \end{pmatrix}
% -\begin{pmatrix}
% \nabla_x f(\bar{x},\bar{y}) \\
% -\nabla_y f(\bar{x},\bar{y})
% \end{pmatrix}, \begin{pmatrix}
% x-\bar{x} \\
% y-\bar{y}
% \end{pmatrix}>
% \geq f(\bar{x}, y) - f(x, \bar{y}) \geq \frac{\mu}{2} \norm{\begin{pmatrix}
% \bar{x} \\
% \bar{y}
% \end{pmatrix} - \begin{pmatrix}
% x \\
% y
% \end{pmatrix}}^2
% \end{align*}
% \end{corollary}
% \begin{proof}
%     By convexity of $f$ in $X$
%     \begin{align*}
%         f(x,\bar{y})\geq f(\bar{x},\bar{y})+\langle \nabla_x f(\bar{x},\bar{y}),x-\bar{x} \rangle \\
%         f(\bar{x},y)\geq f(x,y) + \langle \nabla_x f(x,y),\bar{x}-x \rangle
%     \end{align*}
% Re-arranging, combining, using both the definition of a saddle point, and $f$ having a quadratic lower bound assumption,
% \begin{align*}
% \langle \nabla_x f(x,y)-\nabla_x f(\bar{x},\bar{y}),x-\bar{x}\rangle \geq f(x,y) - f(\bar{x},y)\geq \frac{\mu_x}{2}\norm{\bar{x}-x}^2
% \end{align*}
% The above can be done in a similar manner for $f$ being concave in $Y$, resulting in,
% \begin{align*}
%     \langle -\nabla_y f(x,y)+\nabla_y f(\bar{x},\bar{y}),y-\bar{y}\rangle \geq f(\bar{x},y) - f(x,y)\geq \frac{\mu_y}{2}\norm{\bar{y}-y}^2
% \end{align*}
% Combining the two gives the result.
% \end{proof}

\section{Proposed Method}
%In this paper, we consider the following general class of primal-dual algorithms which takes a non-negative parameter sequence $\{\tau_k,\sigma_k,\alpha_k,\beta_k,\theta_k\}_{k\geq 0}$. 
\aj{In this section, we present the Generalized Accelerated Primal-Dual (GAPD) algorithm, a first-order method designed to solve saddle-point problems under relaxed growth conditions, such as two-sided Quadratic Functional Growth (QFG) or Quadratic Gradient Growth (QGG). The details of the proposed method are outlined in Algorithm \ref{alg:APD}.}
\begin{algorithm}[htbp]
\caption{Generalized Accelerated Primal-Dual (GAPD) Method}\label{alg:APD}
\begin{algorithmic}[1]
    \STATE \textbf{Input:} $\{\tau_k, \sigma_k, \alpha_k, \beta_k, \theta_k\}_{k \geq 0}$, $(x_0, y_0) \in \mathcal{X} \times \mathcal{Y}$
    \STATE $(x_{-1}, y_{-1}) \gets (x_0, y_0)$
    \FOR{$k \geq 0$}
        \STATE \hspace{-1em}  \hspace{0.5em} $q_k^y \gets \nabla_y f(x_k, y_k) - \nabla_y f(x_{k-1}, y_{k-1})$;
        \STATE \hspace{-1em}  \hspace{0.5em} $y_{k+1} \gets \arg\min_{y \in {Y}} \left\{ -\langle \nabla_y f(x_k, y_k) + \alpha_k q_k^y, y \rangle + \frac{1}{\sigma_k} \bD_\cY(y, y_k) \right\}$;
        \STATE \hspace{-1em} \hspace{0.5em} $q_k^x \gets \nabla_x f(x_k, y_k) - \nabla_x f(x_{k-1}, y_{k-1})$;
        \STATE \hspace{-1em}  \hspace{0.5em} $s_k \gets \theta_k \nabla_x f(x_k, y_{k+1}) + (1 - \theta_k) \nabla_x f(x_k, y_k) + \beta_k q_k^x$;
        \STATE \hspace{-1em}  \hspace{0.5em} $x_{k+1} \gets \arg\min_{x \in {X}} \left\{ \langle s_k, x \rangle + \frac{1}{\tau_k} \bD_\cX(x, x_k)\right\}$;
    \ENDFOR
\end{algorithmic}
\end{algorithm}
\noindent\aj{The GAPD algorithm iteratively updates the primal and dual variables to solve saddle-point problems. It begins with initial  points $(x_0, y_0)$ and predefined parameter sequences $\{\tau_k, \sigma_k, \alpha_k, \beta_k, \theta_k\}_{k \geq 0}$. At each iteration, the algorithm computes the momentum term $q_k^y$, which represents the change in the gradient $\nabla_y f(x, y)$ over consecutive iterations, and updates the dual variable $y_{k+1}$ using a proximal step with respect to the Bregman distance $\bD_{\mathcal{Y}}(y, y_k)$. Similarly, the momentum term $q_k^x$ is calculated to capture the change in the gradient $\nabla_x f(x, y)$, and an aggregate gradient $s_k$ is constructed by combining gradients and momentum terms. The primal variable $x_{k+1}$ is then updated via a proximal step with respect to the Bregman distance $\bD_{\mathcal{X}}(x, x_k )$}. {Note that GAPD encompasses both OGDA in \cite{mokhtari2020b} and APD in \cite{hamedani2021primal}: when $\theta_k = 0$, GAPD reduces to OGDA, and when $\theta_k = 1$ and $\beta_k=0$, GAPD corresponds to APD.} 

\aj{The next section discusses the algorithm's convergence properties, while detailed proofs for step-size selection are provided in Appendix \ref{append}.}

\section{Convergence Analysis}

%\subsection{Supporting Lemmas} 
First, we provide and prove a useful lemma that is necessary in our one-step analysis.
%\todo{added subscript to all Bregmans to be consistent with rest of paper}
\begin{lemma}\label{argminineq}
 Let $\mathcal{X}$ be a finite-dimensional normed vector space with norm $\|\cdot\|_{\mathcal{X}}$, $g : \mathcal{X} \to \mathbb{R} \cup \{+\infty\}$ be a closed convex function with convexity modulus $\mu \geq 0$ with respect to $\|\cdot\|_{\mathcal{X}}$, and $\bD_{\mathcal{X}} : \mathcal{X} \times \mathcal{X} \to \mathbb{R}_+$ be a Bregman distance function corresponding to a strictly convex function $\varphi : \mathcal{X} \to \mathbb{R}$ that is differentiable on an open set containing $\text{dom } g$. Given $\bar{x} \in \text{dom } g$ and $t > 0$, let
\begin{equation}\label{tsengopt}
    x^+ = \arg\min_{x \in \mathcal{X}} g(x) + t \bD_{\mathcal{X}}(x, \bar{x}).
\end{equation}
Then for all $x \in \mathcal{X}$, the following inequality holds:
\begin{equation}
    g(x) + t\bD_{\mathcal{X}}(x, \bar{x}) \geq g(x^+) + t\bD_\mathcal{X}(x^+, \bar{x}) + t\bD_{\mathcal{X}}(x, x^+) + \frac{\mu}{2} \|x - x^+\|^2_{\mathcal{X}}.
\end{equation}
\end{lemma}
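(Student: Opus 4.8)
The plan is to use the first-order optimality condition for the Bregman-proximal subproblem \eqref{tsengopt} together with the three-point identity for Bregman distances. Since $x^+$ minimizes the strongly convex function $x \mapsto g(x) + t\bD_{\mathcal{X}}(x,\bar x)$ over $\mathcal{X}$, there exists a subgradient $\xi \in \partial g(x^+)$ such that $\langle \xi + t(\nabla\varphi(x^+) - \nabla\varphi(\bar x)), x - x^+\rangle \ge 0$ for all $x \in \mathcal{X}$ (if $g$ carries an indicator of a constraint set this is the variational inequality form; otherwise it is just $0 \in \partial g(x^+) + t(\nabla\varphi(x^+)-\nabla\varphi(\bar x))$). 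This is the key structural fact I would extract first.

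Next I would invoke $\mu$-strong convexity of $g$ to write $g(x) \ge g(x^+) + \langle \xi, x - x^+\rangle + \frac{\mu}{2}\|x - x^+\|_{\mathcal X}^2$, and substitute the lower bound $\langle \xi, x - x^+\rangle \ge -t\langle \nabla\varphi(x^+) - \nabla\varphi(\bar x), x - x^+\rangle$ coming from optimality. This yields
\begin{equation*}
g(x) \ge g(x^+) - t\langle \nabla\varphi(x^+) - \nabla\varphi(\bar x), x - x^+\rangle + \frac{\mu}{2}\|x - x^+\|_{\mathcal X}^2.
\end{equation*}
It then remains to recognize that $-\langle \nabla\varphi(x^+) - \nabla\varphi(\bar x), x - x^+\rangle = \bD_{\mathcal X}(x,\bar x) - \bD_{\mathcal X}(x,x^+) - \bD_{\mathcal X}(x^+,\bar x)$, which is the standard three-point identity obtained by expanding all three Bregman distances in terms of $\varphi$ and cancelling. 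Plugging this in and rearranging gives exactly the claimed inequality $g(x) + t\bD_{\mathcal X}(x,\bar x) \ge g(x^+) + t\bD_{\mathcal X}(x^+,\bar x) + t\bD_{\mathcal X}(x,x^+) + \frac{\mu}{2}\|x-x^+\|_{\mathcal X}^2$.

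The main obstacle is the careful handling of the optimality condition in a way that does not implicitly assume differentiability of $g$ or interiority of $x^+$: one must phrase it as the existence of $\xi \in \partial g(x^+)$ with $\xi + t(\nabla\varphi(x^+)-\nabla\varphi(\bar x))$ lying in the normal cone to $\mathcal{X}$ (or, since $g$ may already encode constraints via $+\infty$ values and $\mathcal{X}$ is the whole space here, simply $0 \in \partial(g + t\bD_{\mathcal X}(\cdot,\bar x))(x^+)$, using that $\varphi$ is differentiable on an open set containing $\mathrm{dom}\, g$ so the subdifferential sum rule applies). A minor technical point is that strict convexity of $\varphi$ guarantees the minimizer is unique and $\bD_{\mathcal X}(x,x^+) \ge 0$, but positivity of $\bD_{\mathcal X}$ is not actually needed for the inequality itself — only the three-point identity and $\mu$-strong convexity of $g$ are used. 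I would also note that when $\mu = 0$ the statement reduces to the familiar Bregman-proximal inequality, so the proof must go through uniformly in $\mu \ge 0$.
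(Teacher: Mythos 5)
Your proof follows essentially the same route as the paper's: first-order optimality of the Bregman-proximal subproblem, the $\mu$-strong-convexity subgradient inequality at $x^+$, and the three-point identity, combined in the same order. One small transcription slip (which the paper's own write-up shares): the three-point identity actually gives $\langle \nabla\varphi(x^+)-\nabla\varphi(\bar x),\,x-x^+\rangle = \bD_{\mathcal X}(x,\bar x)-\bD_{\mathcal X}(x,x^+)-\bD_{\mathcal X}(x^+,\bar x)$ \emph{without} the leading minus sign you wrote, so the quantity you substitute, $-\langle \nabla\varphi(x^+)-\nabla\varphi(\bar x),\,x-x^+\rangle$, equals $\bD_{\mathcal X}(x,x^+)+\bD_{\mathcal X}(x^+,\bar x)-\bD_{\mathcal X}(x,\bar x)$; with that sign corrected the rearrangement yields the claimed inequality exactly.
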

\textit{Proof.}
This result is a trivial extension of Property 1 in \cite{Tseng08_1J}. The first-order optimality condition for {(\ref{tsengopt})} implies that
\[
0 \in \partial g(x^+) + t \nabla_x \bD_{\mathcal{X}}(x^+, \bar{x}),
\]
where $\nabla_x \bD_\mathcal{X}$ denotes the partial gradient with respect to the first argument. Note that for any $x \in \text{dom } g$, we have
$\nabla_x \bD_{\mathcal{X}}(x, \bar{x}) = \nabla \varphi(x) - \nabla \varphi(\bar{x})$. 
Hence,
$t (\nabla \varphi(\bar{x}) - \nabla \varphi(x^+)) \in \partial g(x^+)$. 
Using the convexity inequality for $f$, we get
\[
g(x) \geq g(x^+) + t \langle \nabla \varphi(\bar{x}) - \nabla \varphi(x^+), x - x^+ \rangle + \frac{\mu}{2} \|x - x^+\|_{\mathcal{X}}^2.
\]
Adding $t \bD_{\mathcal{X}}(x, \bar{x})$ to both sides,
\[
g(x) + t \bD_{\mathcal{X}}(x, \bar{x}) \geq g(x^+) + t \bD_\mathcal{X}(x, \bar{x}) + t \langle \nabla \varphi(\bar{x}) - \nabla \varphi(x^+), x - x^+ \rangle + \frac{\mu}{2} \|x - x^+\|_{\mathcal{X}}^2.
\]
Applying the three-point identity for Bregman distances,
\[
\bD_{\mathcal{X}}(x, \bar{x}) = \bD_{\mathcal{X}}(x, x^+) + \bD_{\mathcal{X}}(x^+, \bar{x}) + \left\langle \nabla \varphi(x^+) - \nabla \varphi(\bar{x}), x - x^+ \right\rangle,
\]
which implies
\[
\langle \nabla \varphi(\bar{x}) - \nabla \varphi(x^+), x - x^+ \rangle = \bD_{\mathcal{X}}(x, \bar{x}) - \bD_{\mathcal{X}}(x, x^+) - \bD_{\mathcal{X}}(x^+, \bar{x}).
\]
Substituting into the previous inequality gives
\begin{align*}
g(x) + t \bD_{\mathcal{X}}(x, \bar{x}) &\geq g(x^+) + t \bD_\mathcal{X}(x, \bar{x}) + t \left( \bD_\mathcal{X}(x, \bar{x}) - \bD_\mathcal{X}(x, x^+) - \bD_\mathcal{X}(x^+, \bar{x}) \right) \\
&\quad + \frac{\mu}{2} \|x - x^+\|_{\mathcal{X}}^2 \\
&\geq g(x^+) + t \bD_{\mathcal{X}}(x^+, \bar{x}) + t \bD_{\mathcal{X}}(x, x^+) + \frac{\mu}{2} \|x - x^+\|_{\mathcal{X}}^2,
\end{align*}
as claimed.
\qed
%The result in (4.2) immediately follows from this inequality.

Next, we provide a one-step analysis of the proposed algorithm. 
%This result can be viewed as a generalization of the result in  \cite{hamedani2021primal}. 
\begin{lemma}\label{lem:one-step}
Suppose Assumption \ref{assumption:convex-concave-smoothness} holds.
Let $\{(x_k, y_k)\}_{k\geq 0}$ be the sequence generated by GAPD in Algorithm \ref{alg:APD}. Moreover, for any $k \geq 0$ define 
$z_k \triangleq [(x_k)^\top \ (y_k)^\top]^\top$, $F_k \triangleq F(z_k)= [\nabla_x f(x_k,y_k)^\top \ -\nabla_y f(x_k,y_k)^\top]^\top$, $q_k \triangleq [(q_k^x)^\top \ (q_k^y)^\top]^\top$. Then for any $k\geq 0$,
% \begin{align}
%     &\theta_k \left(f(x_{k+1},y) - f(x,y_{k+1})\right) + (1-\theta_k)\langle F(x_{k+1}),z_{k+1}-z \rangle \nonumber \\
%     &\leq \langle F(x_{k+1}) - F(x_k), z_{k+1} - z \rangle_{C_k} - \langle q_k, z_k - z \rangle_{B_k} + \left(\bD_\cZ^{A_k}(z, z_k) - \bD_\cZ^{A_k}(z, z_{k+1})\right) + \Lambda_k, \nonumber
% \end{align}
\begin{align*}
    &\theta_k \left(f(x_{k+1},y) - f(x,y_{k+1})\right) + (1-\theta_k)\langle F(z_{k+1}),z_{k+1}-z \rangle  \\
    &\leq \langle F(z_{k+1}) - F(z_k), z_{k+1} - z \rangle_{C_k} - \langle q_k, z_k - z \rangle_{B_k}\\&\quad + \left(\bD_\cZ^{A_k}(z, z_k) - \bD_\cZ^{A_k}(z, z_{k+1})\right) + \Lambda_k, 
\end{align*}
where $\Lambda_k \triangleq \langle q_k, z_k - z_{k+1} \rangle_{B_k} - \bD_\cZ^{D_k}(z_{k+1}, z_k)$,
and the matrices are defined as $A_k \triangleq \diag\left(\frac{1}{\tau_k}\bI_n,\frac{1}{\sigma_k}\bI_m\right)$, $B_k \triangleq \diag\left(\beta_k\bI_n, \alpha_k\bI_m\right)$, $C_k \triangleq \diag\left((1 - \theta_k)\bI_n, \bI_m\right)$, $D_k \triangleq \diag\left(\left(\frac{1}{\tau_k} - \theta_k L_{xx}\right)\bI_n, \frac{1}{\sigma_k}\bI_m\right)$.
\end{lemma}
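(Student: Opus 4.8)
The plan is to apply the Bregman prox inequality of Lemma~\ref{argminineq} separately to the two minimization steps defining $y_{k+1}$ and $x_{k+1}$, and then splice the resulting bounds together using convexity of $f(\cdot,y)$, concavity of $f(x,\cdot)$, and the one-sided descent estimate \eqref{eq:lip-x-extend}. Applying Lemma~\ref{argminineq} to the $y$-update with $g(\cdot)=-\langle\nabla_y f(x_k,y_k)+\alpha_k q_k^y,\cdot\rangle+\ind{Y}(\cdot)$ (convexity modulus $0$), step size $t=1/\sigma_k$, anchor $y_k$, and minimizer $y_{k+1}$, yields for every $y\in Y$ a lower bound on $\langle\nabla_y f(x_k,y_k)+\alpha_k q_k^y,\,y_{k+1}-y\rangle$ by $\tfrac{1}{\sigma_k}\bD_\cY(y_{k+1},y_k)+\tfrac{1}{\sigma_k}\bD_\cY(y,y_{k+1})-\tfrac{1}{\sigma_k}\bD_\cY(y,y_k)$; equivalently, an upper bound on $-\langle\nabla_y f(x_k,y_k),\,y_{k+1}-y\rangle$. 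Applying the same lemma to the $x$-update with $g(\cdot)=\langle s_k,\cdot\rangle+\ind{X}(\cdot)$, $t=1/\tau_k$, anchor $x_k$, minimizer $x_{k+1}$, gives $\langle s_k,\,x_{k+1}-x\rangle\le\tfrac{1}{\tau_k}\bD_\cX(x,x_k)-\tfrac{1}{\tau_k}\bD_\cX(x,x_{k+1})-\tfrac{1}{\tau_k}\bD_\cX(x_{k+1},x_k)$ for every $x\in X$.

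Next, I would rewrite the target quantity (adding and subtracting $\theta_k f(x_{k+1},y_{k+1})$) as $\theta_k\big(f(x_{k+1},y)-f(x_{k+1},y_{k+1})\big)+\theta_k\big(f(x_{k+1},y_{k+1})-f(x,y_{k+1})\big)+(1-\theta_k)\langle\nabla_x f(z_{k+1}),x_{k+1}-x\rangle-(1-\theta_k)\langle\nabla_y f(z_{k+1}),y_{k+1}-y\rangle$. Concavity of $f(x_{k+1},\cdot)$ bounds the first bracket by $-\theta_k\langle\nabla_y f(z_{k+1}),y_{k+1}-y\rangle$, so the entire dual contribution collapses to $-\langle\nabla_y f(z_{k+1}),y_{k+1}-y\rangle$; writing this as $-\langle\nabla_y f(z_k),y_{k+1}-y\rangle+\langle\nabla_y f(z_k)-\nabla_y f(z_{k+1}),y_{k+1}-y\rangle$, the first piece is controlled by the dual prox inequality above, and the second is exactly the $\cY$-block of $\langle F(z_{k+1})-F(z_k),z_{k+1}-z\rangle_{C_k}$, since the $C_k$-weight on $\cY$ equals $1$ and the $\cY$-block of $F$ is $-\nabla_y f$.

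For the primal side, I would insert the intermediate iterate $x_k$ and combine convexity of $f(\cdot,y_{k+1})$ with the upper estimate in \eqref{eq:lip-x-extend} to obtain $\theta_k\big(f(x_{k+1},y_{k+1})-f(x,y_{k+1})\big)\le\theta_k\langle\nabla_x f(x_k,y_{k+1}),x_{k+1}-x\rangle+\theta_k\tfrac{L_{xx}}{2}\|x_{k+1}-x_k\|_\cX^2$. Rewriting $(1-\theta_k)\langle\nabla_x f(z_{k+1}),x_{k+1}-x\rangle=(1-\theta_k)\langle\nabla_x f(z_k),x_{k+1}-x\rangle+(1-\theta_k)\langle\nabla_x f(z_{k+1})-\nabla_x f(z_k),x_{k+1}-x\rangle$ and using the algorithmic identity $\theta_k\nabla_x f(x_k,y_{k+1})+(1-\theta_k)\nabla_x f(x_k,y_k)=s_k-\beta_k q_k^x$, the aggregate $\langle s_k,x_{k+1}-x\rangle$ is bounded by the primal prox inequality, while the leftover $(1-\theta_k)\langle\nabla_x f(z_{k+1})-\nabla_x f(z_k),x_{k+1}-x\rangle$ is the $\cX$-block of $\langle F(z_{k+1})-F(z_k),z_{k+1}-z\rangle_{C_k}$.

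It then remains to collect terms. The Bregman differences assemble into $\bD_\cZ^{A_k}(z,z_k)-\bD_\cZ^{A_k}(z,z_{k+1})$; splitting $x_{k+1}-x=(x_k-x)+(x_{k+1}-x_k)$ and $y_{k+1}-y=(y_k-y)+(y_{k+1}-y_k)$ in the momentum inner products regroups them into $-\langle q_k,z_k-z\rangle_{B_k}$ together with the $\langle q_k,z_k-z_{k+1}\rangle_{B_k}$ term inside $\Lambda_k$; and, using $\tfrac{1}{2}\|\cdot\|_\cX^2\le\bD_\cX$ from Remark~\ref{bregineq}, the residual $\theta_k\tfrac{L_{xx}}{2}\|x_{k+1}-x_k\|_\cX^2-\tfrac{1}{\tau_k}\bD_\cX(x_{k+1},x_k)-\tfrac{1}{\sigma_k}\bD_\cY(y_{k+1},y_k)$ is dominated by $-\bD_\cZ^{D_k}(z_{k+1},z_k)$, giving the claim. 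The hard part is precisely this bookkeeping: the $\theta_k$-splitting must be orchestrated so that exactly a $\theta_k$-fraction of the primal gap is routed through the algorithmic gradient $\nabla_x f(x_k,y_{k+1})$ — paying the smoothness penalty that makes $D_k$ smaller than $A_k$ in the $\cX$-block — while the remaining $(1-\theta_k)$-fraction of the primal term and the full dual term are converted into gradients evaluated at $z_{k+1}$, from which the $C_k$-weighted difference $F(z_{k+1})-F(z_k)$ emerges; note that \eqref{eq:lip-x-extend} is invoked only on the primal side, because the $x$-update evaluates $\nabla_x f$ at the already-updated $y_{k+1}$ whereas the $y$-update uses only $\nabla_y f(x_k,y_k)$.
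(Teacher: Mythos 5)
Your proposal is correct and follows essentially the same route as the paper: both apply Lemma~\ref{argminineq} to the two proximal updates, invoke convexity of $f(\cdot,y)$, concavity of $f(x,\cdot)$, and the smoothness bound \eqref{eq:lip-x-extend} only on the primal side (since $s_k$ uses $\nabla_x f(x_k,y_{k+1})$), and then regroup the gradient-difference, momentum, and Bregman terms into the $C_k$, $B_k$, $A_k$, $D_k$ blocks. Your add-and-subtract of $\theta_k f(x_{k+1},y_{k+1})$ is just the paper's step of multiplying its inequality \eqref{eq:lip-conv} by $\theta_k$ and adding it to \eqref{eq:sc-x}, read in the opposite direction, so the two arguments are algebraically identical.
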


\begin{proof}
Applying Lemma \ref{argminineq} for the update of $x_{k+1}$ in Algorithm \ref{alg:APD} we obtain the following inequality for any $x\in X$
\begin{align}%\label{eq:sc-f}
0 &\leq \fprod{s_k,x-x_{k+1}}  +\frac{1}{\tau_k}\Big[\bD_{\cX}(x,x_k)-{\bD_{\cX}(x,x_{k+1})}-\bD_{\cX}(x_{k+1},x_k)\Big]\nonumber\\
&=\theta_k\fprod{\grad_{x}f(x_k,y_{k+1}),x-x_{k+1}}+\fprod{(1-\theta_k)\grad_{x}f(x_k,y_k)+\beta_kq_k^x,x-x_{k+1}}\nonumber \\
&\quad +\frac{1}{\tau_k}\Big[\bD_{\cX}(x,x_k)-{\bD_{\cX}(x,x_{k+1})}-\bD_{\cX}(x_{k+1},x_k)\Big]\nonumber.
\end{align} 
For notational convenience, we define \( \nabla_x f_k \triangleq \nabla_x f(x_k, y_k) \) and \( \nabla_y f_k \triangleq \nabla_y f(x_k, y_k) \) for any $k\geq 0$, in the rest of the proof.
Adding \((1 - \theta_k) \langle \nabla_x f_{k+1}, x_{k+1} - x \rangle\) to both sides of the inequality above, then rearranging the terms in the inner product imply that 
\begin{align}\label{eq:sc-x}
&(1-\theta_k)\fprod{\grad_x f_{k+1},x_{k+1}-x}\\ \nonumber&\leq (1-\theta_k)\fprod{\grad_x f_{k+1}-\grad_x f_k,x_{k+1}-x} \\
&\quad - \beta_k\fprod{q_k^x,x_k-x} + \beta_k\fprod{q_k^x,x_k-x_{k+1}} + \theta_k\fprod{\grad_x f(x_k,y_{k+1}),x-x_{k+1}} \nonumber\\
&\quad + \frac{1}{\tau_k}\Big[\bD_{\cX}(x,x_k)-\bD_{\cX}(x,x_{k+1})-\bD_{\cX}(x_{k+1},x_k)\Big]\nonumber.
\end{align}
Next, using convexity of \(f(\cdot, y)\) for any \(y \in Y\), and Lipschitz continuity of \(\nabla_x f\), we obtain
\begin{align}\label{eq:lip-conv}
    f(x,y_{k+1}) &\geq f(x_k,y_{k+1}) + \fprod{\nabla_x f(x_k,y_{k+1}), x - x_k} \\
    &\geq f(x_{k+1}, y_{k+1}) + \fprod{\nabla_x f(x_k,y_{k+1}), x - x_{k+1}} - \frac{L_{xx}}{2} \|x_{k+1}-x_k\|^2_\cX \nonumber \\
    &\geq f(x_{k+1}, y) + \fprod{\nabla_y f_{k+1}, y_{k+1} - y} + \fprod{\nabla_x f(x_k, y_{k+1}), x - x_{k+1}} \nonumber \\
    &\quad - L_{xx} \bD_{\mathcal{X}}(x_{k+1}, x_k), \nonumber
\end{align}
where in the last inequality we used concavity of \(f(x, \cdot)\) for any \(x \in X\) and 1-strongly convexity of the distance generator function \(\varphi\). Multiplying \eqref{eq:lip-conv} by $\theta_k$ and adding the result to \eqref{eq:sc-x} leads to 
\begin{align}\label{eq:sc-x2}
&(1-\theta_k)\fprod{\grad_x f_{k+1},x_{k+1}-x}+\theta_k\fprod{\grad_y f_{k+1},y_{k+1}-y} +\theta_k(f(x_{k+1},y)-f(x,y_{k+1})) \\
&\leq (1-\theta_k)\fprod{\grad_x f_{k+1}-\grad_x f_k,x_{k+1}-x} -\beta_k\fprod{q_k^x,x_k-x}+\beta_k\fprod{q_k^x,x_k-x_{k+1}} \nonumber\\
&\quad +\frac{1}{\tau_k}\Big(\bD_{\cX}(x,x_k)-{\bD_{\cX}(x,x_{k+1})}\Big) +\Big(\theta_kL_{xx}-\frac{1}{\tau_k}\Big)\bD_\cX(x_{k+1},x_k). \nonumber
\end{align}

Similarly, for the update of $y_{k+1}$ we can obtain the following inequality for any $y\in Y$,
\begin{align}\label{eq:sc-y}
&\fprod{\grad_y f_{k+1},y-y_{k+1}}\\\nonumber &\leq \fprod{\grad_y f_{k+1}-\grad_y f_k,y-y_{k+1}}+\alpha_k\fprod{q_k^y,y_k-y}+\alpha_k\fprod{q_k^y,y_{k+1}-y_k}\\
&\quad +\frac{1}{\sigma_k}\Big[\bD_{\cY}(y,y_k)-{\bD_{\cY}(y,y_{k+1})}-\bD_{\cY}(y_{k+1},y_k)\Big]\nonumber.
\end{align}
The proof of this descent inequality can be found in Section \ref{appendb}. Combining \eqref{eq:sc-x2} with \eqref{eq:sc-y} and rearranging the terms leads to the desired result.
\end{proof}
%Here, we will combine our one-step lemma with the QGG-QGG assumption to analyze our algorithm and its convergence rate. 

%\subsection{Main result}
\eyh{The results in Lemma \ref{lem:one-step} relied solely on convex-concave and smoothness assumptions. Now, we introduce additional structural assumptions to guarantee a linear convergence rate for the proposed Algorithm \ref{alg:APD}.} 
\begin{assumption}\label{assumption:QGG}
    The objective function $f$ satisfies either two-sided QFG or two-sided QGG with parameters $(\mu_x,\mu_y)>0$.
\end{assumption}
\begin{assumption}\label{assumption:Bregman}
    The Bregman distance generating functions $\psi_\cX$ and $\psi_\cY$ have Lipschitz continuous gradient with constants $L_{\psi_\cX}$ and $L_{\psi_\cY}$, respectively. 
\end{assumption}
\eyh{Next, we establish general conditions on the step-sizes and algorithm parameters necessary to prove convergence. These conditions ensure the formation of telescopic terms in the upper-bound of the gap function and enforce that the step-sizes are bounded by the relevant Lipschitz constants. Subsequently, we specify parameter choices that meet these conditions.}
\begin{assumption}\label{assumption:stepsize}
    The parameters of Algorithm \ref{alg:APD} $\{\tau_k,\sigma_k,\theta_k,\alpha_k,\beta_k\}_{k\geq 0}$ satisfy the following conditions:
    \begin{subequations}\label{eq:stepsize-cond}
    \begin{align}
        &t_k(1-\theta_k)=t_{k+1}\beta_{k+1},\quad t_k=t_{k+1}\alpha_{k+1}\label{eq:stepsize-cond-a}\\
        &t_{k+1}/\tau_{k+1}\leq t_k(\frac{1}{\tau_k}+ \eyh{\varsigma_k}\frac{\mu_x}{2}),\quad t_{k+1}/\sigma_{k+1}\leq t_k(\frac{1}{\sigma_k}+  \eyh{\varsigma_k}\frac{\mu_y}{2}) \label{eq:stepsize-cond-b}\\
        &\norm{q_{k+1}}_{\cZ^*}\norm{z_{k+1}- z_k}_{\widetilde{C}_k}+\Lambda_k-\frac{1}{2}\norm{q_k}^2_{\Gamma^{-1}B_k}+\frac{t_{k+1}}{2t_k}\norm{q_{k+1}}^2_{\Gamma^{-1}B_{k+1}}\leq 0,\label{eq:stepsize-cond-c}
    \end{align}
\end{subequations}
\eyh{where $\widetilde{C}_k\triangleq \diag(\{(1-\theta_k)^2 L_{\psi_\cX}^2\bI_n,L_{\psi_\cY}^2\bI_m\})$, { $\Gamma\in \mathbb{S}^{n}_{++}$ is an arbitrary positive definite matrix, and $\varsigma_k>0$ }such that $\varsigma_k=\theta_k$ when $f$ satisfies two-sided QFG and $\varsigma_k=2(1-\theta_k)$ when satisfies two-sided QGG.}
\end{assumption}

\aj{Before presenting the main convergence result, we establish the non-expansivity of the Bregman projection under the Lipschitz continuity of the Bregman distance-generating function, a key component in our analysis.}
\begin{lemma}\label{lem:nonexpansive-proj}
Let $\varphi_\cU$ be a distance generating function as in Definition \ref{def-bregman} with the corresponding Bregman distance function $\bD_\cU(\cdot,\cdot)$. Let the Bregman projection be defined as in Definition \ref{def-bregman-projection}, and suppose Assumption \ref{assumption:Bregman} holds. Then for any $u,\bar u\in\cU$, 
    \begin{align*}
        \|\cP_{\cC}(u)-\cP_{\cC}(\bar u)\|_\cU\leq L_{\varphi_\cU}\norm{u-\bar u}_{\cU}.
    \end{align*}
\end{lemma}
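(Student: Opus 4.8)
## Proof Plan for Lemma on Non-Expansivity of Bregman Projection

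The plan is to exploit the variational characterization of the Bregman projection via its first-order optimality condition, then use monotonicity of $\nabla \varphi_\cU$ (which follows from convexity of $\varphi_\cU$) together with the strong convexity and Lipschitz smoothness of $\varphi_\cU$. Write $u^+ \triangleq \cP_\cC(u)$ and $\bar u^+ \triangleq \cP_\cC(\bar u)$. First I would record the optimality conditions for the two projection problems: since $u^+$ minimizes $\bD_\cU(\cdot,u)$ over $\cC$, we have $\langle \nabla\varphi_\cU(u^+) - \nabla\varphi_\cU(u),\, w - u^+\rangle \ge 0$ for all $w\in\cC$, and similarly $\langle \nabla\varphi_\cU(\bar u^+) - \nabla\varphi_\cU(\bar u),\, w - \bar u^+\rangle \ge 0$. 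Plugging $w = \bar u^+$ into the first and $w = u^+$ into the second, then adding, yields
\[
\langle \nabla\varphi_\cU(u^+) - \nabla\varphi_\cU(\bar u^+),\, u^+ - \bar u^+\rangle \;\le\; \langle \nabla\varphi_\cU(u) - \nabla\varphi_\cU(\bar u),\, u^+ - \bar u^+\rangle.
\]

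Next I would lower-bound the left-hand side and upper-bound the right-hand side. By $1$-strong convexity of $\varphi_\cU$, the left-hand side is at least $\|u^+ - \bar u^+\|_\cU^2$. For the right-hand side, apply the generalized Cauchy–Schwarz (duality pairing) inequality to get $\langle \nabla\varphi_\cU(u) - \nabla\varphi_\cU(\bar u),\, u^+ - \bar u^+\rangle \le \|\nabla\varphi_\cU(u) - \nabla\varphi_\cU(\bar u)\|_{\cU^*}\,\|u^+ - \bar u^+\|_\cU$, and then invoke Assumption \ref{assumption:Bregman} (Lipschitz continuity of $\nabla\varphi_\cU$ with constant $L_{\varphi_\cU}$) to bound $\|\nabla\varphi_\cU(u) - \nabla\varphi_\cU(\bar u)\|_{\cU^*} \le L_{\varphi_\cU}\|u - \bar u\|_\cU$. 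Combining the two bounds gives $\|u^+ - \bar u^+\|_\cU^2 \le L_{\varphi_\cU}\|u-\bar u\|_\cU\,\|u^+ - \bar u^+\|_\cU$, and dividing through by $\|u^+ - \bar u^+\|_\cU$ (the inequality being trivial when this is zero) yields the claim.

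The main subtlety — rather than a genuine obstacle — is ensuring the optimality condition is stated correctly: the objective $\bD_\cU(w,u) = \varphi_\cU(w) - \varphi_\cU(u) - \langle\nabla\varphi_\cU(u), w - u\rangle$ has gradient in $w$ equal to $\nabla\varphi_\cU(w) - \nabla\varphi_\cU(u)$, so the variational inequality over the closed convex set $\cC$ reads $\langle \nabla\varphi_\cU(w^+) - \nabla\varphi_\cU(u),\, w - w^+\rangle \ge 0$ for all $w \in \cC$, which is what the argument uses. One should also note the projection is well-defined (existence and uniqueness of the minimizer) because $\bD_\cU(\cdot,u)$ is strongly convex and $\cC$ is closed and convex; this is already implicit in Definition \ref{def-bregman-projection}. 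No other assumptions beyond convexity of $\varphi_\cU$ (for monotonicity of its gradient, used via strong convexity) and Assumption \ref{assumption:Bregman} are needed, and the proof is short.
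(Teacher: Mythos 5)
Your proposal is correct and follows essentially the same route as the paper: the variational inequalities for the two projections are combined, the left side is bounded below via $1$-strong convexity of $\varphi_\cU$, and the right side is bounded above via Cauchy--Schwarz and the Lipschitz continuity of $\nabla\varphi_\cU$. Your write-up is in fact slightly more careful than the paper's (which misstates that $u,\bar u\in\cC$ when it means the projections lie in $\cC$), but the argument is the same.
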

\begin{proof}
The first-order optimality conditions for the Bregman projection for points $u, \bar u$ are given by:
\begin{align*}
&\langle \nabla\varphi (\cP_{\cC}(\bar u))-\nabla\varphi(\bar u),z-\cP_{\cC}(\bar u)\rangle \geq 0, \forall z\in \mathcal{C},\\ 
&\langle \nabla\varphi (\cP_{\cC}(u))-\nabla\varphi(u),w-\cP_{\cC}(u)\rangle \geq 0, \forall w\in \mathcal{C}.
\end{align*}
By definition, $u,\bar u \in \mathcal{C}$, so it follows that:
\begin{align*}
&\langle \nabla\varphi (\cP_{\cC}(\bar u))-\nabla\varphi(\bar u),\cP_{\cC}(u)-\cP_{\cC}(\bar u)\rangle \geq 0,\\ 
&\langle \nabla\varphi (\cP_{\cC}(u))-\nabla\varphi(u),\cP_{\cC}(u)-\cP_{\cC}(\bar u)\rangle \leq 0.
\end{align*}
Re-arranging and combining the inequalities results in,
\begin{align*}
\langle \nabla\varphi (\cP_{\cC}(u))-\nabla\varphi (\cP_{\cC}(\bar u)),\cP_{\cC}(u)-\cP_{\cC}(\bar u)\rangle \leq \langle \nabla\varphi(u)-\nabla\varphi(\bar u),\cP_{\cC}(u)-\cP_{\cC}(\bar u)\rangle.
\end{align*}
Utilizing the definition of 1-strong convexity for the left hand side, and Lipschitz continuity of the gradient of $\varphi$, the desired result can be concluded. 
% \begin{align*}
% \|\cP_{\cC}(u)-\cP_{\cC}(\bar u)\|_\cU\leq L_{\varphi_\cU} \|u - \bar u\|
% \end{align*}
\end{proof}

Now we are ready to prove the main convergence result of our paper. 
\begin{theorem}[Convergence rate of GAPD]\label{GAPDthm}
Let \( \{(x_k, y_k)\}_{k \geq 0} \) be the sequence generated by Algorithm \ref{alg:APD}. Suppose Assumptions \ref{assumption:convex-concave-smoothness}-\ref{assumption:stepsize} hold. Then, for any $K\geq 1$ and $\Gamma \succ 0$, the following holds
        \begin{align*}
            \bD_\cZ^{A_K-\Gamma B_K}(\bar z_{K}, z_{K}) & \leq \frac{t_0}{t_K} \bD_\cZ^{A_0}(\bar z_{0}, z_{0}),
        \end{align*}
        where $t_k \triangleq \prod_{j=0}^{k-1} \alpha_j$, $\bar{z}_k\triangleq \cP_{Z^*}(z_k)$ for any $k\geq 0$, and $A_k,B_k$ are defined in Lemma \ref{lem:one-step}. 
        %{\color{blue} Note that \( k \) denotes an arbitrary iteration index, while \( K \geq 1 \) denotes the total number of iterations used in the final convergence bounds. Further, for each iterate $z_k$ generated by Algorithm \ref{alg:APD}, including the initial $z_0$ and terminal $z_K$, $\bar{z_k}$ denotes the Bregman projection onto the solution set}
        %$P_k \triangleq \text{diag} \left( (\frac{1}{\tau_k} - \Gamma\beta_k)\bI_n, (\frac{1}{\sigma_k} - \Gamma \alpha_k)\bI_m \right)$, $A_k \triangleq \text{diag} \big( \frac{1}{\tau_k}\bI_n, \frac{1}{\sigma_k}\bI_m \big), \Gamma>0$.
\end{theorem}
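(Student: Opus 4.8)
The argument is a weighted Lyapunov/telescoping estimate driven by Lemma \ref{lem:one-step}. For each $k\ge 0$ I would instantiate that inequality at the free point $z=\bar z_{k+1}:=\cP_{Z^*}(z_{k+1})$, since this is precisely the choice that activates the growth hypotheses. When $f$ satisfies two-sided QFG, \eqref{eq:qfg} gives $f(x_{k+1},\bar y_{k+1})-f(\bar x_{k+1},y_{k+1})\ge \bD_\cZ^{\bM}(z_{k+1},\bar z_{k+1})$, while $\langle F(z_{k+1}),z_{k+1}-\bar z_{k+1}\rangle\ge 0$ follows from monotonicity of $F$ (a consequence of convex--concavity) together with the first-order optimality $\langle F(\bar z_{k+1}),z-\bar z_{k+1}\rangle\ge 0$, $z\in Z$, of the saddle point $\bar z_{k+1}$; when $f$ satisfies two-sided QGG the roles swap, with \eqref{eq:qgg} plus the same optimality giving $\langle F(z_{k+1}),z_{k+1}-\bar z_{k+1}\rangle\ge 2\bD_\cZ^{\bM}(z_{k+1},\bar z_{k+1})$ and $f(x_{k+1},\bar y_{k+1})-f(\bar x_{k+1},y_{k+1})\ge 0$ because $\bar z_{k+1}$ is a saddle point. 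Either way the left-hand side of Lemma \ref{lem:one-step} is bounded below by $\varsigma_k\,\bD_\cZ^{\bM}(z_{k+1},\bar z_{k+1})$, with $\varsigma_k$ exactly the quantity appearing in Assumption \ref{assumption:stepsize}.

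Multiply the $k$-th inequality by $t_k$ and sum over $k=0,\dots,K-1$; I would first clear away everything not of the form $\bD_\cZ^{A_k}$. Splitting $\langle F(z_{k+1})-F(z_k),z_{k+1}-\bar z_{k+1}\rangle_{C_k}=\langle F(z_{k+1})-F(z_k),z_k-\bar z_{k+1}\rangle_{C_k}+\langle F(z_{k+1})-F(z_k),z_{k+1}-z_k\rangle_{C_k}$ and using that $F(z_{k+1})-F(z_k)$ coincides with $q_{k+1}$ up to the sign of its $y$-block (which the diagonal weights absorb), condition \eqref{eq:stepsize-cond-a} matches $t_kC_k$ block-by-block with $t_{k+1}B_{k+1}$, so the $(z_k-\bar z_{k+1})$-piece cancels against the momentum terms $-\langle q_k,z_k-\bar z_{k+1}\rangle_{B_k}$ up to residuals of size $\|\bar z_{k+1}-\bar z_{k+2}\|$ and $\|z_{k+1}-z_k\|$, while the leftover $(z_{k+1}-z_k)$-piece is dominated by $\|q_{k+1}\|_{\cZ^*}\|z_{k+1}-z_k\|_{\widetilde C_k}$ via Cauchy--Schwarz and Lipschitz continuity of $\nabla f$. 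Bundling these residuals with $\Lambda_k$ and the discrete difference of the momentum energies $\tfrac12\|q_k\|^2_{\Gamma^{-1}B_k}$ and $\tfrac{t_{k+1}}{2t_k}\|q_{k+1}\|^2_{\Gamma^{-1}B_{k+1}}$, condition \eqref{eq:stepsize-cond-c} says exactly that their weighted sum is nonpositive, so all of it drops out.

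What survives is the Bregman skeleton $\sum_k t_k\bigl(\bD_\cZ^{A_k}(\bar z_{k+1},z_k)-\bD_\cZ^{A_k}(\bar z_{k+1},z_{k+1})\bigr)$ plus the gained $\sum_k t_k\varsigma_k\bD_\cZ^{\bM}(z_{k+1},\bar z_{k+1})$. The negative term $-t_k\bD_\cZ^{A_k}(\bar z_{k+1},z_{k+1})$ from step $k$ must absorb the incoming $+t_{k+1}\bD_\cZ^{A_{k+1}}(\bar z_{k+2},z_{k+1})$ from step $k+1$. Since $\bar z_{k+1}$ minimizes $\bD_\cZ^{A_{k+1}}(\cdot,z_{k+1})$ over $Z^*$, and since (Remark \ref{bregineq} and Assumption \ref{assumption:Bregman}) each Bregman distance is sandwiched between $\tfrac12\|\cdot\|^2$ and $\tfrac{1}{2}L_{\psi}\|\cdot\|^2$, it is enough that $t_{k+1}$ times the weights inflated by $L_{\psi_\cX},L_{\psi_\cY}$ be dominated by $t_k(A_k+\varsigma_k\bM)$ — which is exactly condition \eqref{eq:stepsize-cond-b} — modulo replacing $\bar z_{k+2}$ by $\bar z_{k+1}$, whose cost is $\lesssim\|\bar z_{k+1}-\bar z_{k+2}\|^2\le L_{\psi}^2\|z_{k+1}-z_{k+2}\|^2$ and is charged to the $\bD_\cZ^{D_{k+1}}(z_{k+2},z_{k+1})$-slack already set aside in \eqref{eq:stepsize-cond-c} (this is where the $L_\psi^2$ factors in $\widetilde C_k$ come from). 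After these cancellations the sum collapses to $t_K\bD_\cZ^{A_K}(\bar z_K,z_K)$ plus a boundary momentum term on the left and $t_0\bD_\cZ^{A_0}(\bar z_0,z_0)$ on the right, using $q_0=0$ (as $z_{-1}=z_0$) to kill the initial momentum contribution.

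It remains to dispose of the leftover boundary term $-t_K\langle q_K,z_K-\bar z_K\rangle_{B_K}$. Extracting $\tfrac12\|z_K-\bar z_K\|^2_{\Gamma B_K}$ from $\bD_\cZ^{A_K}(\bar z_K,z_K)\ge\tfrac12\|z_K-\bar z_K\|^2_{A_K}$ (Remark \ref{bregineq}) and completing the square converts $\bD_\cZ^{A_K}$ into $\bD_\cZ^{A_K-\Gamma B_K}$ at the price of a term $-\tfrac{t_K}{2}\|q_K\|^2_{\Gamma^{-1}B_K}$, which is nothing but the tail of the telescoped momentum energy; rearranging yields $\bD_\cZ^{A_K-\Gamma B_K}(\bar z_K,z_K)\le\frac{t_0}{t_K}\bD_\cZ^{A_0}(\bar z_0,z_0)$. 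I expect the real obstacle to be the moving reference point: the growth inequalities are sharp only at $\cP_{Z^*}(z_{k+1})$, which shifts from iteration to iteration, so the telescope never closes cleanly; controlling those mismatch terms is exactly why Assumption \ref{assumption:Bregman} is imposed, via the non-expansivity of the Bregman projection established in Lemma \ref{lem:nonexpansive-proj}.
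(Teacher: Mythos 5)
Your overall architecture (lower-bound the left side of Lemma \ref{lem:one-step} by $\varsigma_k\bD_\cZ^{\bM}$, multiply by $t_k$, telescope using \eqref{eq:stepsize-cond-a}--\eqref{eq:stepsize-cond-c}, and absorb the boundary momentum term via Young's inequality with $\Gamma$ and $q_0=0$) matches the paper, but your choice of reference point creates a genuine gap in the telescoping step. The paper instantiates Lemma \ref{lem:one-step} at $z=\bar z_k=\cP_{Z^*}(z_k)$, not at $\bar z_{k+1}$. With that choice the negative terms $-\bD_\cZ^{A_k}(\bar z_k,z_{k+1})-\bD_\cZ^{\bM}(\bar z_k,z_{k+1})$ can be replaced by $-\bD_\cZ^{A_k+\bM}(\bar z_{k+1},z_{k+1})$ using the minimality of the projection in the \emph{favorable} direction ($\bD_\cZ^{A}(\bar z_{k+1},z_{k+1})\le\bD_\cZ^{A}(\bar z_k,z_{k+1})$), after which \eqref{eq:stepsize-cond-b} makes the Bregman skeleton telescope \emph{exactly}; the only projection-drift term is $\fprod{q_{k+1},\bar z_{k+1}-\bar z_k}_{C_k}$, which is precisely the $\norm{q_{k+1}}_{\cZ^*}\norm{z_{k+1}-z_k}_{\widetilde C_k}$ quantity budgeted in \eqref{eq:stepsize-cond-c} via Lemma \ref{lem:nonexpansive-proj}. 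In your version, anchoring at $\bar z_{k+1}$ puts the mismatch in the \emph{forward} term: you must compare $+t_{k+1}\bD_\cZ^{A_{k+1}}(\bar z_{k+2},z_{k+1})$ against $-t_k\bD_\cZ^{A_k}(\bar z_{k+1},z_{k+1})$, and here the projection inequality goes the wrong way, since $\bar z_{k+1}$ \emph{minimizes} $\bD_\cZ(\cdot,z_{k+1})$ over $Z^*$, so $\bD_\cZ^{A_{k+1}}(\bar z_{k+2},z_{k+1})\ge\bD_\cZ^{A_{k+1}}(\bar z_{k+1},z_{k+1})$ and the excess adds to, rather than being absorbed by, your bound.

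Your proposed repair of that excess does not close the gap for two reasons. First, by the three-point identity, $\bD_\cZ(\bar z_{k+2},z_{k+1})-\bD_\cZ(\bar z_{k+1},z_{k+1})=\bD_\cZ(\bar z_{k+2},\bar z_{k+1})+\fprod{\nabla\varphi(\bar z_{k+1})-\nabla\varphi(z_{k+1}),\bar z_{k+2}-\bar z_{k+1}}$; the cross term is first order in $\norm{\bar z_{k+2}-\bar z_{k+1}}$ \emph{times} the distance $\norm{\bar z_{k+1}-z_{k+1}}$ to the solution set, so the excess is not $\lesssim\norm{\bar z_{k+1}-\bar z_{k+2}}^2$ as you claim, and any Young split of it leaks a multiple of the distance-to-solution back into the recursion, which would force a re-derivation of \eqref{eq:stepsize-cond-b}. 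Second, \eqref{eq:stepsize-cond-c} is stated as an inequality $\le 0$ with no quantified surplus, so there is no ``slack already set aside'' to charge these terms to; the theorem must follow from Assumption \ref{assumption:stepsize} exactly as written. (A smaller inaccuracy: the $L_{\psi_\cX}^2,L_{\psi_\cY}^2$ factors in $\widetilde C_k$ do not arise from sandwiching Bregman distances between norms in the $A_k$-telescope; they come from applying Lemma \ref{lem:nonexpansive-proj} to $\norm{\bar z_{k+1}-\bar z_k}$ inside the Cauchy--Schwarz bound on $\fprod{q_{k+1},\bar z_{k+1}-\bar z_k}_{C_k}$.) Your treatment of the growth conditions themselves and of the terminal term is sound; switching the anchor to $\bar z_k$ and then projecting forward only once, as the paper does, is what makes the telescope close without residuals.
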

\begin{proof}
Consider the result of Lemma \ref{lem:one-step} and let $(x,y)=(\bar x_k,\bar y_k)$,
then we obtain the following inequality 
\begin{align}\label{eq:one-step-fqg}
    &\theta_k(f(x_{k+1},\bar y_k)-f(\bar x_k,y_{k+1}))+(1-\theta_k)\fprod{F(z_{k+1}),z_{k+1}-\bar z_k} \\
    &\leq \fprod{q_{k+1},z_{k+1}-\bar z_k}_{C_k}-\fprod{q_k,z_k-\bar z_k}_{B_k} +\left(\bD_\cZ^{A_k}(\bar z_k,z_k)-\bD_\cZ^{A_k}(\bar z_k,z_{k+1})\right)+\Lambda_k \nonumber\\
    & = \fprod{q_{k+1},z_{k+1}-\bar z_{k+1}}_{C_k}-\fprod{q_k,z_k-\bar z_k}_{B_k} +\left(\bD_\cZ^{A_k}(\bar z_k,z_k)-\bD_\cZ^{A_k}(\bar z_k,z_{k+1})\right)\nonumber\\
    &\quad +\fprod{q_{k+1},\bar z_{k+1}-\bar z_{k}}_{C_k}+\Lambda_k.\nonumber
\end{align} 
Next, we derive an upper bound for the final inner product on the right-hand side of \eqref{eq:one-step-fqg}. Specifically, by applying the Cauchy-Schwarz inequality, Assumption \ref{assumption:convex-concave-smoothness}, the non-expansivity of the Bregman projection (Lemma \ref{lem:nonexpansive-proj}), and the strong convexity of the Bregman distance functions, we obtain:
\begin{align}\label{eq:inner-q}
\fprod{q_{k+1},\bar z_{k+1}-\bar z_{k}}_{C_k}&\leq \norm{q_{k+1}}_{\cZ^*}\norm{\bar z_{k+1}-\bar z_k}_{C_k^2} \nonumber \\
&=\norm{q_{k+1}}_{\cZ^*}\sqrt{(1-\theta_k)^2\norm{\bar x_{k+1}-\bar x_k}_\cX^2+\norm{\bar y_{k+1}-\bar y_k}_\cY^2} \nonumber\\
&\leq \norm{q_{k+1}}_{\cZ^*}\sqrt{(1-\theta_k)^2 L_{\psi_\cX}^2\norm{ x_{k+1}- x_k}_\cX^2+L_{\psi_\cY}^2\norm{ y_{k+1}- y_k}_\cY^2} \nonumber\\
&= \norm{q_{k+1}}_{\cZ^*}\norm{z_{k+1}- z_k}_{\widetilde{C}_k},
%&\leq \norm{z_{k+1}-z_k}_{\bL C_k}^2\nonumber\\
%&\leq 2\bD_\cZ^{\bL C_k}(z_{k+1},z_k). \nonumber
\end{align}
where $\widetilde{C}_k= \diag(\{(1-\theta_k)^2 L_{\psi_\cX}^2\bI_n,L_{\psi_\cY}^2\bI_m\})$. 
\eyh{Note that since $\bar{z}_{k}\in Z^*$ and using Assumption \ref{assumption:convex-concave-smoothness} %we have that %$\fprod{F(z_{k+1}),z_{k+1}-\bar{z}_k}\geq $, first-order optimality, and Definition \ref{qgg}, 
we can lower bound the left-hand side of \eqref{eq:one-step-fqg} as follows:
\begin{align}\label{eq:lower-bound}
&\theta_k\left(f(x_{k+1}, \bar{y}_k) - f(\bar{x}_k, y_{k+1})\right) 
+ (1 - \theta_k) \left\langle F(z_{k+1}), z_{k+1} - \bar{z}_k \right\rangle \nonumber \\
&= \theta_k\left(f(x_{k+1}, \bar{y}_k) - f(\bar{x}_k, y_{k+1})\right) 
+ (1 - \theta_k) \left\langle F(z_{k+1}) - F(\bar{z}_k), z_{k+1} - \bar{z}_k \right\rangle 
\nonumber\\
&\geq \bD_\cZ^{\bM}(\bar{z}_k, z_{k+1}), 
\end{align}%
where in the equality we used the fact that $F(\bar{z_k})=0$, and in the last inequality we used Assumption \ref{assumption:QGG}. Recall also that we define
$\bM\triangleq \varsigma_k\diag(\frac{\mu_x}{2}\bI_n,\frac{\mu_y}{2}\bI_m)$, and $\varsigma_k=\theta_k$ when $f$ satisfies two-sided QFG and $2(1-\theta_k)$ when satisfies two-sided QGG.}
Next, using \eqref{eq:inner-q} within \eqref{eq:one-step-fqg} along with \eqref{eq:lower-bound} we conclude that 
\begin{align}
    \bD_\cZ^{\bM}(\bar z_k,z_{k+1})&\leq \fprod{q_{k+1},z_{k+1}-\bar z_{k+1}}_{C_k}-\fprod{q_k,z_k-\bar z_k}_{B_k} \nonumber\\
    &\quad +\left(\bD_\cZ^{A_k}(\bar z_k,z_k)-\bD_\cZ^{A_k}(\bar z_k,z_{k+1})\right) +\norm{q_{k+1}}_{\cZ^*}\norm{z_{k+1}- z_k}_{\widetilde{C}_k}+\Lambda_k.\nonumber
\end{align}
Since $\bar z_{k+1}$ is the closest point to $z_{k+1}$ from the set $Z^\star$, we have that  $\bD_\cZ^{A}(\bar z_{k+1},z_{k+1})\leq \bD_\cZ^{A}(\bar z_k,z_{k+1})$ for any $A\succ 0$. Consequently, this inequality leads to
\begin{align}
    \bD_\cZ^{A_k+ \bM}(\bar z_{k+1},z_{k+1})&\leq \bD_\cZ^{A_k}(\bar z_k,z_k)+\fprod{q_{k+1},z_{k+1}-\bar z_{k+1}}_{C_k}-\fprod{q_k,z_k-\bar z_k}_{B_k} \nonumber\\
    &\quad +\norm{q_{k+1}}_{\cZ^*}\norm{z_{k+1}- z_k}_{\widetilde{C}_k}+\Lambda_k.\nonumber
\end{align}
\eyh{Now multiplying both sides by $t_k= \prod_{j=0}^k \alpha_j $ using the step-size conditions in Assumption \ref{assumption:stepsize} and summing over $ k = 0 $ to $ K - 1 $ we obtain that}
%Note that we can multiply both sides by $ t_k = \prod_{j=0}^k \alpha_j $, and use the assumptions that $ t_k C_k = t_{k+1} B_{k+1} $ and $ t_{k+1} A_{k+1} \preceq t_k (A_k + P) $, which imply  $ 1 \leq \alpha (1 + \tau \kappa_x) \text{ and } 1 \leq \alpha (1 + \sigma \kappa_y)$. Combining these assumptions, we can obtain the following inequality by summing over $ k = 0 $ to $ K - 1 $:
\begin{align}\nonumber
\bD_\cZ^{t_KA_K}(\bar z_{K},z_{K})&\leq \bD_\cZ^{t_0A_0}(\bar z_{0},z_{0})+\fprod{q_K,z_K-\bar z_K}_{t_KB_K}-\fprod{q_0,z_0-\bar z_0}_{t_0B_0}\\
&\quad +\sum_{k=0}^{K-1}t_k(\ey{\norm{q_{k+1}}_{\cZ^*}\norm{z_{k+1}- z_k}_{\widetilde{C}_k}}+\Lambda_k)\nonumber\\
&\leq \bD_\cZ^{t_0A_0}(\bar z_{0},z_{0})+\frac{t_K}{2}\norm{q_K}^2_{\Gamma^{-1}B_K}+\frac{t_K}{2}\norm{z_K-\bar z_K}^2_{\Gamma B_K}\nonumber\\
&\quad +\sum_{k=0}^{K-1}t_k(\ey{\norm{q_{k+1}}_{\cZ^*}\norm{z_{k+1}- z_k}_{\widetilde{C}_k}}+\Lambda_k)\nonumber\\
&\leq \bD_\cZ^{t_0A_0}(\bar z_{0},z_{0})\ey{+\frac{t_K}{2}\norm{q_K}^2_{\Gamma^{-1}B_K}}+t_K\bD_\cZ^{\Gamma B_K}(\bar z_K,z_K)\nonumber\\
&\quad +\sum_{k=0}^{K-1}(\ey{\frac{t_k}{2}\norm{q_k}^2_{\Gamma^{-1}B_k} - \frac{t_{k+1}}{2}\norm{q_{k+1}}^2_{\Gamma^{-1}B_{k+1}}})\nonumber\\
& =\eyh{\bD_\cZ^{t_0A_0}(\bar z_{0},z_{0}) + t_K\bD_\cZ^{\Gamma B_K}(\bar z_K,z_K)},
\end{align}
where the penultimate inequality is derived using Young's inequality, $\langle a, b \rangle \leq \frac{1}{2}\|a\|_{\Gamma}^2\break + \frac{1}{2}\|b\|_{\Gamma^{-1}}^2$ for any $\Gamma \succ 0$, and the assumption that $q_0 = 0$. The final inequality follows from the strong convexity of the Bregman distance function. Finally, the last equality is obtained by telescoping the last two terms, and the result is achieved by moving the last term to the other side of the inequality.
\end{proof}

\eyh{From the result of Theorem \ref{GAPDthm} we observe that $\dist(z_k, Z^*)$ converges to zero at a linear rate, provided that $A_k - \Gamma B_k$ and $\alpha_k$ remain uniformly bounded from below. The following corollary demonstrates that this linear convergence rate can indeed be attained through an appropriate choice of parameters that satisfy Assumption \ref{assumption:stepsize}.}

\begin{corollary}\label{cor:stepresult}
\eyh{Let \( \{(x_k, y_k)\}_{k \geq 0} \) be the sequence generated by Algorithm \ref{alg:APD}. Suppose Assumptions \ref{assumption:convex-concave-smoothness}-\ref{assumption:Bregman} hold. The following selection of the algorithm's parameters satisfies Assumption \ref{assumption:stepsize}.}
\begin{align*}
    &\alpha = \max\{\alpha^*_x, \alpha^*_y\}, \quad \theta\in [0,1], \quad \beta = \alpha(1 - \theta),\\
    &t_k = \alpha^{-k}, \quad {\tau = \frac{2(1 - \alpha)}{\varsigma \alpha \mu_x}, \quad \sigma = \frac{2(1 - \alpha)}{\varsigma \alpha \mu_y},}
\end{align*}
\eyh{where $\alpha^*_x=\Omega\left(1-{\frac{\varsigma\mu_x}{L_{yx}+L_{xx}}}\right)$ and $\alpha^*_y=\Omega\left(1-{\frac{\varsigma\mu_y}{L_{xy}+L_{yy}}}\right)$.}  
% \begin{equation*}
%     \alpha_{y}^* \triangleq \frac{-\mathfrak{B}_y + \sqrt{\mathfrak{B}_y^2 - 4\mathfrak{A}_y \mathfrak{C}_y}}{2 \mathfrak{A}_y}, \qquad  
%     \alpha_{x}^* \triangleq \frac{-\mathfrak{B}_x + \sqrt{\mathfrak{B}_x^2 - 4\mathfrak{A}_x \mathfrak{C}_x}}{2 \mathfrak{A}_x},
% \end{equation*}
% and $\mathfrak{A}_x,\mathfrak{A}_y,\mathfrak{B}_x,\mathfrak{B}_y,\mathfrak{C}_x,\mathfrak{C}_y\in\reals$ are defined as
% \begin{align*}
%     \mathfrak{A}_x &= 2 \sqrt{(2L_{xx}^2 + 2L_{yx}^2)(1 - \theta)} \\
%     \mathfrak{B}_x &= -\mathfrak{C}_x + \frac{\mathfrak{A}_x}{2} \left( L_{\psi_x}^2 - 1 \right) + \theta L_{xx} + \eyh{\varsigma}\mu_x  \\
%     \mathfrak{C}_x &= -L_{\psi_x}^2 \left( \frac{1}{\sqrt{2L_{xx}^2 + 2L_{yx}^2}} \left( 2L_{xx}^2 (1 - \theta)^{5/2} + 2L_{yx}^2 \sqrt{1 - \theta} \right) + \frac{\mathfrak{A}_x}{2} \right) - \theta L_{xx} \\
%     \mathfrak{A}_y &= 2 \sqrt{2L_{xy}^2(1 - \theta) + 2L_{yy}^2} \\
%     \mathfrak{B}_y &= \frac{-4}{\mathfrak{A}_y} \left( L_{xy}^2 (1 - \theta) + L_{yy}^2 \right) + \mathfrak{C}_y + \frac{\mathfrak{A}_y}{2} (L_{\psi_y}^2 - 1) + \frac{\eyh{\varsigma}\mu_y}{2} \\
%     \mathfrak{C}_y &= \frac{-4}{\mathfrak{A}_y} \left( L_{xy}^2 L_{\psi_x}^2 (1 - \theta)^2 + L_{yy}^2 L_{\psi_y}^2 \right) - \frac{\mathfrak{A}_y}{2} L_{\psi_y}^2
% \end{align*}
\eyh{Moreover, the iterates converge linearly to the optimal solution set, i.e., $\bD_\cZ(\bar{z}_K, z_K) \leq \bD^{\bR_K}_\cZ(\bar{z}_0, z_0)$
where $\bR_K\triangleq \frac{\alpha^{K+1}}{(1-\alpha)c}\bM$,  $\bM\triangleq \varsigma\diag(\frac{\mu_x}{2}\bI_n,\frac{\mu_y}{2}\bI_m)$, and $\varsigma=\theta$ when $f$ satisfies two-sided QFG and $2(1-\theta)$ when satisfies two-sided QGG.} Further, we define  $c\triangleq \min\left\{\theta L_{xx}+\sqrt{2L_{xx}^2+2L_{yx}^2},\sqrt{2L_{xy}^2+2L_{yy}^2}\right\}$.
\end{corollary}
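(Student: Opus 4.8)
The plan is to prove the corollary in two stages: first verify that the displayed constant-in-$k$ parameter choice satisfies every clause of Assumption~\ref{assumption:stepsize}, and then substitute these parameters into Theorem~\ref{GAPDthm} and post-process the resulting estimate into the claimed linear rate. For the first stage, clause \eqref{eq:stepsize-cond-a} is immediate from $t_k=\alpha^{-k}$, $\alpha_k\equiv\alpha$, $\beta_k\equiv\beta=\alpha(1-\theta)$: indeed $t_{k+1}\alpha_{k+1}=\alpha^{-(k+1)}\alpha=t_k$ and $t_{k+1}\beta_{k+1}=\alpha^{-(k+1)}\alpha(1-\theta)=t_k(1-\theta)$. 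Clause \eqref{eq:stepsize-cond-b}, after dividing by $t_k$ and using $t_{k+1}/t_k=\alpha^{-1}$, reduces to $\tfrac{1}{\alpha\tau}\le\tfrac1\tau+\tfrac{\varsigma\mu_x}{2}$, i.e.\ $\tau\ge\tfrac{2(1-\alpha)}{\varsigma\alpha\mu_x}$ (and the identical inequality for $\sigma$), which the prescribed $\tau,\sigma$ meet with equality. The substance is clause \eqref{eq:stepsize-cond-c}. Here I would rewrite $\Lambda_k=\langle q_k,z_k-z_{k+1}\rangle_{B_k}-\bD_\cZ^{D_k}(z_{k+1},z_k)$, bound the inner product by Young's inequality with weight $\Gamma$ as $\langle q_k,z_k-z_{k+1}\rangle_{B_k}\le\tfrac12\|q_k\|^2_{\Gamma^{-1}B_k}+\tfrac12\|z_{k+1}-z_k\|^2_{\Gamma B_k}$ (legitimate since $\Gamma$, $B_k$, $D_k$ are simultaneously diagonalizable), and invoke the $1$-strong convexity of the Bregman generators to get $\bD_\cZ^{D_k}(z_{k+1},z_k)\ge\tfrac12\|z_{k+1}-z_k\|^2_{D_k}$, valid once $\alpha$ is large enough that $D_k\succeq0$, i.e.\ $1/\tau\ge\theta L_{xx}$. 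The term $\tfrac12\|q_k\|^2_{\Gamma^{-1}B_k}$ then exactly cancels the $-\tfrac12\|q_k\|^2_{\Gamma^{-1}B_k}$ already present in \eqref{eq:stepsize-cond-c}, and it remains to verify
\[
\|q_{k+1}\|_{\cZ^*}\,\|z_{k+1}-z_k\|_{\widetilde C_k}+\tfrac{1}{2\alpha}\|q_{k+1}\|^2_{\Gamma^{-1}B_{k+1}}-\tfrac12\|z_{k+1}-z_k\|^2_{D_k-\Gamma B_k}\le 0.
\]
Using the Lipschitz bounds of Assumption~\ref{assumption:convex-concave-smoothness} together with $(a+b)^2\le2a^2+2b^2$, both $\|q_{k+1}\|^2_{\cZ^*}$ and $\|q_{k+1}\|^2_{\Gamma^{-1}B_{k+1}}$ are dominated by positive multiples of $2(L_{xx}^2+L_{yx}^2)\|x_{k+1}-x_k\|_\cX^2+2(L_{xy}^2+L_{yy}^2)\|y_{k+1}-y_k\|_\cY^2$, while the mixed $\widetilde C_k$-norm term is handled by Cauchy--Schwarz and AM--GM; the leftover is a single quadratic form in $(\|x_{k+1}-x_k\|_\cX,\|y_{k+1}-y_k\|_\cY)$. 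Because $1/\tau=\tfrac{\varsigma\alpha\mu_x}{2(1-\alpha)}$ and $1/\sigma=\tfrac{\varsigma\alpha\mu_y}{2(1-\alpha)}$ blow up as $\alpha\uparrow1$ while every other constant is fixed, the negative $D_k$-block eventually dominates; setting the $x$- and $y$-block coefficients to zero produces exactly the thresholds $\alpha^*_x=\Omega\big(1-\tfrac{\varsigma\mu_x}{L_{yx}+L_{xx}}\big)$ and $\alpha^*_y=\Omega\big(1-\tfrac{\varsigma\mu_y}{L_{xy}+L_{yy}}\big)$, so $\alpha=\max\{\alpha^*_x,\alpha^*_y\}$ works (for a fixed $\Gamma$, chosen here once and reused in the second stage).

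For the second stage, using the recursion $t_{k+1}=t_k/\alpha_{k+1}$ from \eqref{eq:stepsize-cond-a} we get $t_0=1$, $t_K=\alpha^{-K}$, so Theorem~\ref{GAPDthm} gives $\bD_\cZ^{A_K-\Gamma B_K}(\bar z_K,z_K)\le\alpha^{K}\bD_\cZ^{A_0}(\bar z_0,z_0)$. Since $\tau,\sigma$ are constant, $A_0=A_K=\diag\big(\tfrac1\tau\bI_n,\tfrac1\sigma\bI_m\big)=\tfrac{\alpha}{1-\alpha}\bM$, hence the right-hand side equals $\bD_\cZ^{\frac{\alpha^{K+1}}{1-\alpha}\bM}(\bar z_0,z_0)$. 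On the left, for $\alpha$ near $1$ the matrix $A_K-\Gamma B_K$ is positive definite, and one checks that the chosen $\Gamma$ keeps the blockwise minimum of $A_K-\Gamma B_K$ at least $c=\min\{\theta L_{xx}+\sqrt{2L_{xx}^2+2L_{yx}^2},\,\sqrt{2L_{xy}^2+2L_{yy}^2}\}$; therefore $\bD_\cZ^{A_K-\Gamma B_K}(\bar z_K,z_K)\ge c\,\bD_\cZ(\bar z_K,z_K)$ via the identity $\bD_\cZ^{\diag(a_x\bI,a_y\bI)}=a_x\bD_\cX+a_y\bD_\cY\ge\min\{a_x,a_y\}\bD_\cZ$. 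Dividing by $c$ yields $\bD_\cZ(\bar z_K,z_K)\le\bD_\cZ^{\bR_K}(\bar z_0,z_0)$ with $\bR_K=\tfrac{\alpha^{K+1}}{(1-\alpha)c}\bM$, and since $\alpha<1$ this exhibits linear convergence of $\dist(z_K,Z^*)$ at rate $\alpha^K$.

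The main obstacle is clause \eqref{eq:stepsize-cond-c}. The delicate points there are: arranging the bounds so that the $q$-terms cancel and the residue is purely a quadratic form in the step differences; estimating the mixed $\widetilde C_k$-norm term and the quadratic $q$-term tightly enough that the $D_k-\Gamma B_k$ part can absorb them; and selecting a single $\Gamma$ that simultaneously makes this residual quadratic form negative semidefinite at $\alpha=\max\{\alpha^*_x,\alpha^*_y\}$ and keeps $A_K-\Gamma B_K\succeq c\,\mathbf{I}$ for the final conversion. This is precisely where the combinations $\sqrt{2L_{xx}^2+2L_{yx}^2}$, $\sqrt{2L_{xy}^2+2L_{yy}^2}$ and the correction $\theta L_{xx}$ enter, yielding both the stated thresholds $\alpha^*_x,\alpha^*_y$ and the constant $c$.
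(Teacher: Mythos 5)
Your proposal is correct and follows essentially the same route as the paper's Appendix~\ref{append} proof: conditions \eqref{eq:stepsize-cond-a}--\eqref{eq:stepsize-cond-b} are checked directly, condition \eqref{eq:stepsize-cond-c} is reduced via Young's inequality, cancellation of the $q_k$-terms, the Lipschitz bounds on $q_{k+1}$, and the choice $\Gamma=\diag(\gamma_x\bI_n,\gamma_y\bI_m)$ with $\gamma_x=\sqrt{2L_{xx}^2+2L_{xy}^2}$, $\gamma_y=\sqrt{2L_{yx}^2+2L_{yy}^2}$ to a quadratic inequality in $\alpha$ yielding the stated thresholds. You additionally spell out the final conversion of Theorem~\ref{GAPDthm} into the displayed bound $\bD_\cZ(\bar z_K,z_K)\le\bD_\cZ^{\bR_K}(\bar z_0,z_0)$ (using $A_0=\tfrac{\alpha}{1-\alpha}\bM$ and $A_K-\Gamma B_K\succeq c\,\bI$), a step the paper leaves implicit, and you correctly retain the factor $t_{k+1}/(2t_k)=1/(2\alpha)$ on the $q_{k+1}$-term.
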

\begin{proof}
    The proof is provided in Appendix \ref{append}.
\end{proof}

\begin{remark}
\eyh{We established the linear convergence rate of GAPD under Assumption \ref{assumption:QGG}, demonstrating its versatility as a unified algorithm that encompasses APD \cite{hamedani2021primal} and OGDA \cite{mokhtari2020b} as special cases when $\theta=1$ and $\theta=0$, respectively. The linear convergence rate is influenced by the parameter $\varsigma > 0$, which equals $\theta$ if $f$ satisfies the two-sided QFG condition, and $2(1-\theta)$ if $f$ satisfies the two-sided QGG condition. This result underscores that APD achieves linear convergence under the two-sided QFG condition, whereas OGDA requires two-sided QGG condition.} 

Other existing first-order methods, such as the extragradient (EG) method, can handle the general constrained saddle-point problem we study and more generally VIs, %provided the standard convex–concave assumptions hold. 
however, these approaches have been shown to achieve linear convergence only under the stronger assumption of strong convexity–strong concavity and more generally strong monotonicity -- see \cite{faechinei2003finite}. Furthermore, methods such as the alternating direction method of multipliers (ADMM) are primarily designed for convex constrained minimization problems. For example, \cite{hong2017linear} show that ADMM attains linear convergence when the variables are separable, the objective has a specific composite structure, and the constraint coefficient matrices are full rank. These conditions limit the applicability of ADMM, and such methods cannot address the general convex–concave saddle-point problems considered in this paper.
%these methods require stronger assumptions than our method requires to achieve linear convergence.
%In particular, \cite{hong2017linear} show that ADMM attains linear convergence when \green{but in comparison to our assumptions, they require a specific decomposition of the objective $f_k(x_k) \;=\; g_k(A_k x_k) \;+\; h_k(x_k)$, where $g$ is strictly convex on the interior of its domain, and each block matrix of the partitioned constraint matrix is full column rank.} \cite{mokhtari2020b} obtains a linear convergence rate for both OGDA and EG methods, but only by assuming the problem satisfies strong convexity and strong concavity. Our method does not assume any of the above while still achieving a linear convergence rate for the general constrained saddle problem. 
\end{remark}

\section{Class of problems satisfying two-sided QFG/QGG}\label{classproblems}
In this section, we aim to investigate specific problems satisfying Assumption \ref{assumption:QGG}. To begin, we define a key constant that will be used throughout this section.
\begin{definition}[Hoffman Constant \cite{hoffman1952approximate}]
\label{def:hoffman}
Let \( \mathcal{P} = \{x \in \mathbb{R}^n : Ax = b, Cx \leq d\} \) be a non-empty polyhedron, where \( A \in \mathbb{R}^{p \times n} \), \( C \in \mathbb{R}^{m \times n} \), and \( b \in \mathbb{R}^p \), \( d \in \mathbb{R}^m \). For a given pair of norms \( \|\cdot\|_\alpha \) on \( \mathbb{R}^{p+m} \) and \( \|\cdot\|_\beta \) on \( \mathbb{R}^n \), and the corresponding dual norms \( \|\cdot\|_{\alpha^*} \) and \( \|\cdot\|_{\beta^*} \), the Hoffman constant \( \theta(A, C) > 0 \) is defined as the smallest constant satisfying the \textit{Hoffman inequality}:
\[
\|x - \bar{x}\|_\beta \leq \theta(A, C) \left\| \begin{bmatrix} Ax - b \\ [Cx - d]_+ \end{bmatrix} \right\|_\alpha \quad \forall x \in \mathbb{R}^n,
\]
where \( \bar{x} \) is the projection of \( x \) onto \( \mathcal{P} \) in the \( \|\cdot\|_\beta \)-norm, and \( [Cx - d]_+ \) denotes the component-wise maximum \( \max(0, Cx - d) \). \cite{klatte2020hoffman} showed that the Hoffman constant is equal to:
\begin{align*}
\theta(A,C) = 
\sup \left\{ 
\left\| 
\begin{bmatrix} 
u \\ 
v 
\end{bmatrix} 
\right\|_{\alpha^*} :
\begin{aligned}
&\left\| A^T u + C^T v \right\|_{\beta^*} = 1, \\
&\text{rows of } C \text{ correspond to nonzero components of } v, \\
&\text{rows of } A \text{ are linearly independent}
\end{aligned}
\right\}.
\end{align*}

If \( C = 0 \) (i.e., there are no inequality constraints), and \( A \in \mathbb{R}^{p \times n} \) has full row rank, the Hoffman constant simplifies to:
\[
\theta(A, 0) = \frac{1}{\sigma_{\min}(A)},
\]
where \( \sigma_{\min}(A) \) denotes the smallest nonzero singular value of \( A \).
\end{definition}
The Hoffman constant provides a measure of how constraint violations, measured in the $ \|\cdot\|_\alpha$-norm, bound the distance to the feasible set $\mathcal{P}$, measured in the $ \|\cdot\|_\beta$-norm (see \cite{hoffman1952approximate},\cite{necoara2019linear} for more details).

%Further, we rely on a result from \cite{rockafellar1998variational}:
%\begin{theorem}[\cite{rockafellar1998variational} Theorem 6.46]\label{thm:rockafell-poly} For a polyhedral set $C \subset \mathbb{R}^n$ and any point $\bar x \in C$, the cones $T_C(\bar x)$ and $N_C(\bar x)$ are polyhedral. Indeed, relative to any representation
%\[C \;=\; \{\, x \mid \langle a_i, x \rangle \le \alpha_i \ \text{for } i=1,\ldots,m \,\} \]
%and the active index set $I(\bar x) := \{\, i \mid \langle a_i, \bar x \rangle = \alpha_i \,\}$, one has
%\[T_C(\bar x) \;=\; \{\, w \mid \langle a_i, w \rangle \le 0 \ \text{for all } i \in I(\bar x) \,\},\]
%\[N_C(\bar x) \;=\; \{\, y_1 a_1 + \cdots + y_m a_m \mid y_i \ge 0 \ \text{for } i \in I(\bar x),\ y_i = 0 \ \text{for } i \notin I(\bar x) \,\}.\]\end{theorem}
\medskip
\noindent \eyh{Let 
%$\Phi:\reals^p\times\reals^q\to \reals$ be a continuously differentiable function such that $\Phi(u,\cdot)$ and $-\Phi(\cdot,v)$ are strongly convex functions with moduli $\kappa_x,\kappa_y>0$, for any $u\in\reals^p$ and $v\in\reals^q$, respectively.
$h:\reals^p\to \reals,g:\reals^q\to \reals$ be two continuously differentiable and strongly convex {functions} with {moduli} $\kappa_x, \kappa_y > 0$. We consider the following class of constrained structured convex-concave saddle point problem:}
\begin{align}\label{eq:constrained-opt2}
    \min_{x\in X}\max_{y\in Y}~h(C_1 x) + \langle Ax,y \rangle -g(C_2y), 
    %=\min_{x\in X}\max_{y\in Y}~ \frac{1}{2}\norm{C_1 x-b_1}^2+ \langle Ax,y \rangle - \frac{1}{2}\norm{C_2 y-b_2}^2 \rangle  
\end{align}
where
$X = \{\ x\in\mathbb{R}^n \mid Gx\leq a \}, \: 
Y = \{\, y\in\mathbb{R}^m \mid Fy\leq c\}$, $G\in \mathbb R^{r\times n}$, $F\in\mathbb R^{s\times m}$, $A\in\reals^{m\times n}$, $C_1\in\reals^{p\times n}$, and $C_2\in\reals^{q\times m}$. Suppose a saddle point solution $(x^*,y^*)$ of \eqref{eq:constrained-opt2} exists. In the next proposition, we show that this class of functions belong to the two-sided QFG and QGG under certain assumptions. %\blue{We provide numerical validation of this problem in Section~\ref{numerics}.}

\begin{proposition}\label{prop:appl-2qfg-2qgg}
Consider problem \eqref{eq:constrained-opt2} and suppose that the matrices $G$ and $F$ have full-row rank. 
Assume that $h$ and $g$ are $\kappa_x$- and $\kappa_y$-strongly convex, respectively, and that there exist constants $\xi_1,\xi_2,\xi_3,\xi_4>0$ such that 
\begin{subequations}\label{eq:constraint-matrix-cond}
\begin{align}
&\xi_1 C_1^\top C_1\succeq A^\top A, && 
\xi_2 C_1^\top C_1\succeq \|\lambda^*\|^2 G^\top G,\\
&\xi_3 C_2^\top C_2\succeq A A^\top, && 
\xi_4 C_2^\top C_2\succeq \|\nu^*\|^2 F^\top F, 
\end{align}
\end{subequations}
where $\lambda^*\in\mathbb R^r,\nu^*\in\mathbb R^s$ denote the unique dual multipliers corresponding to the constraints in $X$ and $Y$, respectively. 
Further, let the Bregman generating functions be 
$\varphi_\cX(x)=\tfrac12\|x\|_2^{2}$ and $\varphi_\cY(y)=\tfrac12\|y\|_2^{2}$. 
Then the problem class \eqref{eq:constrained-opt2} satisfies two-sided QGG and two-sided QFG conditions with moduli
\[
\mu_x=\mu_y=
\frac{\min\{\kappa_x,\kappa_y\}}
{\theta(D,J)^2\max\{1+\xi_1+\xi_2,\;1+\xi_3+\xi_4\}}.
\]
\end{proposition}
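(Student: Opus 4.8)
I would prove both assertions by the same two‑stage argument: first reduce each growth quantity to a lower bound in terms of $\|C_1(x-\bar x)\|_2^2+\|C_2(y-\bar y)\|_2^2$ using the strong convexity of $h$ and $g$, and then convert this into a bound in terms of $\|z-\bar z\|_2^2$ via a Hoffman‑type error bound for the solution set together with the spectral comparisons \eqref{eq:constraint-matrix-cond}. Throughout, the setting is Euclidean ($\varphi_\cX,\varphi_\cY$ are $\tfrac12\|\cdot\|_2^2$), $\bar z=(\bar x,\bar y)=\cP_{Z^*}(z)$, and I use that $\bar z\in Z^*$ is equivalent to $\langle F(\bar z),z'-\bar z\rangle\ge0$ for all $z'\in X\times Y$. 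Writing $F(z)=\big(C_1^\top\nabla h(C_1x)+A^\top y,\ C_2^\top\nabla g(C_2y)-Ax\big)$, a direct expansion gives the key identity $\langle F(z)-F(\bar z),z-\bar z\rangle=\langle\nabla h(C_1x)-\nabla h(C_1\bar x),C_1(x-\bar x)\rangle+\langle\nabla g(C_2y)-\nabla g(C_2\bar y),C_2(y-\bar y)\rangle$ (the bilinear cross‑terms $\pm\langle A(x-\bar x),y-\bar y\rangle$ cancel); by strong convexity this is $\ge\kappa_x\|C_1(x-\bar x)\|_2^2+\kappa_y\|C_2(y-\bar y)\|_2^2$. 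For QFG I split $f(x,\bar y)-f(\bar x,y)=[f(x,\bar y)-f(\bar x,\bar y)]+[f(\bar x,\bar y)-f(\bar x,y)]$, linearize $h$ at $C_1\bar x$ in the first bracket and $g$ at $C_2\bar y$ in the second: the first bracket is $\ge\langle\nabla_xf(\bar z),x-\bar x\rangle+\tfrac{\kappa_x}{2}\|C_1(x-\bar x)\|_2^2$ and the second is $\ge\langle-\nabla_yf(\bar z),y-\bar y\rangle+\tfrac{\kappa_y}{2}\|C_2(y-\bar y)\|_2^2$; discarding the two linear terms (which are $\ge0$ by the variational inequality at $\bar z$) yields $f(x,\bar y)-f(\bar x,y)\ge\tfrac{\kappa_x}{2}\|C_1(x-\bar x)\|_2^2+\tfrac{\kappa_y}{2}\|C_2(y-\bar y)\|_2^2$.

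Next I would pin down $Z^*$ as a polyhedron. Evaluating the variational inequality at two solutions $z_1^*,z_2^*\in Z^*$ gives $\langle F(z_1^*)-F(z_2^*),z_1^*-z_2^*\rangle\le0$, while the identity above makes this $\ge\kappa_x\|C_1(x_1^*-x_2^*)\|_2^2+\kappa_y\|C_2(y_1^*-y_2^*)\|_2^2\ge0$, so $C_1x=p$ and $C_2y=q$ are constant on $Z^*$. The KKT conditions for the primal and dual subproblems at a saddle point, with the (unique, by full‑row‑rank of $G$ and $F$) multipliers $\lambda^*\ge0,\nu^*\ge0$, then force $A^\top y=w:=-C_1^\top\nabla h(p)-G^\top\lambda^*$, $Ax=v:=C_2^\top\nabla g(q)+F^\top\nu^*$, $\langle\lambda^*,Gx-a\rangle=0$ and $\langle\nu^*,Fy-c\rangle=0$ on $Z^*$. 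Conversely, any $(x,y)\in X\times Y$ satisfying all these equalities has $F(x,y)=(-G^\top\lambda^*,-F^\top\nu^*)$, hence $\langle F(x,y),z'-(x,y)\rangle=-\langle\lambda^*,Gx'-a\rangle-\langle\nu^*,Fy'-c\rangle\ge0$ for all $z'=(x',y')\in X\times Y$, i.e.\ $(x,y)\in Z^*$. Thus $Z^*=\{z:\,Dz=d^*,\,Jz\le j^*\}$, where $J=\diag(G,F)$ and $D$ collects the rows $C_1,\,A,\,\lambda^{*\top}G$ on the $x$‑block and $C_2,\,A^\top,\,\nu^{*\top}F$ on the $y$‑block (the scalar rows being omitted if $\lambda^*=0$ or $\nu^*=0$).

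Because $z\in X\times Y$ makes $[Jz-j^*]_+=0$, the Hoffman inequality (Definition \ref{def:hoffman}) yields $\|z-\bar z\|_2\le\theta(D,J)\|Dz-d^*\|_2$. Expanding $\|Dz-d^*\|_2^2=\|C_1(x-\bar x)\|_2^2+\|A(x-\bar x)\|_2^2+|\lambda^{*\top}G(x-\bar x)|^2+\|C_2(y-\bar y)\|_2^2+\|A^\top(y-\bar y)\|_2^2+|\nu^{*\top}F(y-\bar y)|^2$ and using \eqref{eq:constraint-matrix-cond} (e.g.\ $\|A(x-\bar x)\|_2^2\le\xi_1\|C_1(x-\bar x)\|_2^2$ and $|\lambda^{*\top}G(x-\bar x)|^2\le\|\lambda^*\|^2\|G(x-\bar x)\|_2^2\le\xi_2\|C_1(x-\bar x)\|_2^2$, symmetrically for $\xi_3,\xi_4$ on the $y$‑block), I get $\|Dz-d^*\|_2^2\le\max\{1{+}\xi_1{+}\xi_2,\,1{+}\xi_3{+}\xi_4\}\big(\|C_1(x-\bar x)\|_2^2+\|C_2(y-\bar y)\|_2^2\big)$. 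Combining gives $\|C_1(x-\bar x)\|_2^2+\|C_2(y-\bar y)\|_2^2\ge\|z-\bar z\|_2^2\big/\big(\theta(D,J)^2\max\{1+\xi_1+\xi_2,\,1+\xi_3+\xi_4\}\big)$. Substituting into the two lower bounds from the first paragraph, and noting $\bD_\cZ^{\bM}(z,\bar z)=\tfrac{\mu_x}{2}\|x-\bar x\|_2^2+\tfrac{\mu_y}{2}\|y-\bar y\|_2^2$ here, both \eqref{eq:qgg} and \eqref{eq:qfg} follow with $\mu_x=\mu_y=\min\{\kappa_x,\kappa_y\}\big/\big(\theta(D,J)^2\max\{1+\xi_1+\xi_2,\,1+\xi_3+\xi_4\}\big)$.

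I expect the main obstacle to be the polyhedral identification of $Z^*$ in the second paragraph — in particular the reverse inclusion and the constancy of $Ax$ and $A^\top y$ on $Z^*$, which rely on the uniqueness of the dual multipliers (this is exactly where the full‑row‑rank assumption on $G$ and $F$ is used). Getting this description right, including the correct scaled rows $\lambda^{*\top}G,\nu^{*\top}F$ of $D$ so that \eqref{eq:constraint-matrix-cond} applies cleanly, is the crux; the remaining steps — the strong‑convexity reductions and the Hoffman estimate — are short and routine.
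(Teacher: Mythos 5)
Your proposal is correct and follows essentially the same three-stage route as the paper: strong-convexity lower bounds in terms of $\|C_1(x-\bar x)\|_2^2+\|C_2(y-\bar y)\|_2^2$, a polyhedral description of $Z^*$ via the KKT system, and the Hoffman inequality combined with the spectral conditions \eqref{eq:constraint-matrix-cond}. Your local choices differ in three places, all to the good: (i) you obtain the constancy of $C_1x^*$ and $C_2y^*$ on $Z^*$ from monotonicity of $F$ evaluated at two solutions, where the paper uses the rectangle property $Z^*=X^*\times Y^*$ and forces equality in the convexity inequality for $f(\cdot,y_1^*)$; (ii) you handle the linear terms in the QFG bound through the variational inequality $\langle F(\bar z),z-\bar z\rangle\ge 0$, which is the right way to treat the constrained case (the paper's corresponding equality implicitly uses $\nabla_x f(\bar z)=-A^\top\bar y - \dots = 0$-type cancellation that only holds up to the multiplier term); and (iii) you explicitly prove the reverse inclusion $\{Dz=d^*,\,Jz\le j^*\}\cap(X\times Y)\subseteq Z^*$, which the paper omits but which is genuinely needed — Hoffman bounds the distance to the polyhedron, and if that polyhedron strictly contained $Z^*$ the estimate would not control $\dist(z,Z^*)$. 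The one step you flag as the crux but do not fully close is the constancy of $Ax^*$ and $A^\top y^*$ on $Z^*$ (equivalently, that a single multiplier pair $(\lambda^*,\nu^*)$ serves all of $Z^*$): full row rank of $G,F$ gives uniqueness of the multiplier at a \emph{fixed} saddle point, but to get the same multiplier across $Z^*$ you should note that $\xi_1 C_1^\top C_1\succeq A^\top A$ and $\xi_3 C_2^\top C_2\succeq AA^\top$ imply $\ker C_1\subseteq\ker A$ and $\ker C_2\subseteq\ker A^\top$, so the already-established identities $C_1x_1^*=C_1x_2^*$ and $C_2y_1^*=C_2y_2^*$ force $Ax_1^*=Ax_2^*$ and $A^\top y_1^*=A^\top y_2^*$, after which full row rank pins down $\lambda^*,\nu^*$. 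This is a one-line completion; the paper's own proof is no more explicit on this point.
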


\begin{remark}
    \eyh{The proof of this proposition is inspired by \cite{necoara2019linear}, but it differs in key aspects due to the distinct nature of saddle point problems compared to minimization problems, requiring additional attention to detail. For instance, while the optimal solution set of a convex minimization problem can be characterized by the set of points whose objective value equals the optimal value (a property utilized in \cite{necoara2019linear}), this property does not extend to saddle point problems as the optimal objective value can be attained at points that are not saddle point solutions, necessitating a more nuanced approach to the proof.} 
\end{remark}
\begin{proof}
Let $f(x,y)$ denote the objective function in \eqref{eq:constrained-opt2}. 
We first show that the solution set to problem \eqref{eq:constrained-opt2} is a polyhedral set. Note that due to convex-concave structure of \eqref{eq:constrained-opt2}, the saddle point solution set of \eqref{eq:constrained-opt2} denoted by $Z^*$ is a convex set; moreover, if $(x_1^*,y_1^*)\in Z^*$ and $(x_2^*,y_2^*)\in Z^*$ then $(x_i^*,y_j^*)\in Z^*$ for any $i,j\in\{1,2\}$. We claim that there exist unique vectors $(t^*_x,t^*_y)\in\reals^p\times\reals^q$ such that $C_1x^*=t^*_x$ and $C_2 y^*=t^*_y$ for any $(x^*,y^*)\in Z^*$. Let $f(x,y)=h(C_1x)+\fprod{Ax,y}-g(C_2y)$, and suppose $(x_1^*,y_1^*)\in Z^*$ and $(x_2^*,y_2^*)\in Z^*$ are two distinct saddle point solutions, then from the convexity of $f(\cdot,y^*_1)$ it follows that
\begin{equation}\label{eq:sp-convex}
    f\left(\frac{x_1^*+x_2^*}{2},y^*_1\right)\leq \frac{1}{2}f(x_1^*,y^*_1) + \frac{1}{2}f(x_2^*,y^*_1).
\end{equation}
Moreover, from the saddle point definition, we have $f(x_1^*,y_1^*)\leq f(x,y_1^*)$ and $f(x_2^*,y_1^*)\leq f(x,y_1^*)$ for any $x$. Therefore, selecting $x=\frac{x_1^*+x_2^*}{2}$ implies that $\frac{1}{2}f(x_1^*,y^*_1) + \frac{1}{2}f(x_2^*,y^*_1)\leq f\left(\frac{x_1^*+x_2^*}{2},y^*_1\right)$ which combining with \eqref{eq:sp-convex} implies that $f\left(\frac{x_1^*+x_2^*}{2},y^*_1\right)= \frac{1}{2}f(x_1^*,y^*_1) + \frac{1}{2}f(x_2^*,y^*_1)$. Therefore,
\begin{equation}\label{eq:sp-x1-x2}
    h\left(C_1(\frac{x_1^*+x_2^*}{2})\right)= \frac{1}{2}h(C_1x_1^*) + \frac{1}{2}h(C_1x_2^*).
\end{equation}
Using strong convexity of $h(\cdot)$ we have that 
\begin{equation}\label{eq:sp-sc}
    h\left(C_1(\frac{x_1^*+x_2^*}{2})\right)\leq \frac{1}{2}h(C_1x_1^*) + \frac{1}{2}h(C_1x_2^*)-\frac{\kappa_x}{8}\|C_1(x_1^*-x_2^*)\|^2.
\end{equation}
Strong convexity of $h$ implies that $C_1x^*_1=C_1x^*_2$. The same argument with concavity in $y$ and strong convexity of $g$ gives the mirror result $C_2y^*_1=C_2y^*_2$. Therefore there exist unique vectors $t_x^*\in\mathbb{R}^p$ and $t_y^*\in\mathbb{R}^q$ such that $C_1 x^* = t_x^*$ and $C_2 y^* = t_y^*$ for all $(x^*,y^*)\in Z^*$. Now, let us also define unique vectors $(T_x^*,T_y^*)$ such that $\grad h(C_1x^*)=T_x^*$ and $\grad g(C_2y^*)=T_y^*$. 

For any $(x^*,y^*)\in Z^*$, differentiability of $h,g$ and convexity/concavity give the KKT equations, where $\lambda^*,\nu^*$ are the associated unique KKT multipliers,
\begin{align*}
&C_1^\top \nabla h(C_1 x^*) + A^\top y^* - G^\top \lambda^* = 0,\quad
-\,C_2^\top \nabla g(C_2 y^*) + A x^* - F^\top \nu^* = 0, \\ 
&Gx^* \le a,\;\; Fy^* \le c,\;\;
\lambda^* \ge 0,\;\; \nu^* \ge 0,\;\;
\diag (\lambda^*) (a - Gx^*) = 0,\;\; \diag (\nu^*) (c - Fy^*) = 0.
\end{align*}
% For the complementarity conditions, note that since our multipliers are unique, we can write the conditions as,
% \begin{align*} \lambda^* \odot (Gx-a)=0 \Rightarrow \diag(\lambda^*)(Gx-a)=0\\
% \quad \nu^* \odot (Fy-c) =0\Rightarrow \diag(\nu^*)(Fy-c)=0 
% \end{align*}
where $\diag (\lambda^*), \diag(\nu^*)$ denote diagonal matrices with $\lambda^*,\nu^*$ on diagonal entries. Recalling that $z=[x^\top\ y^\top]^\top$, we can then define the block matrices for our $Z^*$,
\begin{align*}
D \triangleq \begin{bmatrix} C_1 & 0\\ A & 0 \\ \diag(\lambda^*)G & 0\\0 & C_2 \\ 0 & A^\top \\ 0 & \diag(\nu^*)F \end{bmatrix}, \quad b\triangleq \begin{bmatrix}
    t^*_x \\ C_2^\top T^*_y - F^\top \nu^* \\ \diag (\lambda^*) a\\ t_y^* \\G^\top \lambda^*-C_1^\top T_x^* \\ \diag (\nu^*) c
\end{bmatrix}, \quad J\triangleq \begin{bmatrix}  G & 0\\
0 & F\end{bmatrix}, \quad l =\begin{bmatrix}
    a \\ c
\end{bmatrix},
\end{align*}
to give us,
\begin{align*}
    Z^*=\{z\in Z \mid \text{there exists unique $\lambda^*,\nu^*$ such that}\quad  Dz= b, \quad Jz\leq l \}.
\end{align*}
Since $Z^*$ is defined by affine equality and inequality equations, we can conclude it is a polyhedron. Applying Def. $\ref{def:hoffman}$, there exists a constant $\theta(D,J)$ such that 
\begin{align*}
    \norm{z-\bar{z}}^2 \leq \theta^2 (D,J) ( \norm{Dz-b}^2 + \norm{[Jz-l]_+}^2).
\end{align*}
In fact, for any $z\in Z$ it follows that $[Jz-l]_+=0$. Therefore, expanding and re-arranging the right-hand side and noting that $b=D\bar z$, we obtain
\begin{align*}
\norm{z-\bar{z}}^2&\leq \theta^2 (D,J)\Big(
  \|C_1(x-\bar{x})\|^2 + \|C_2(y-\bar{y})\|^2 + \|A(x-\bar{x})\|^2 \\
&\quad
+ \|A^\top(y-\bar{y})\|^2
+ \|\lambda^*\|^2 \|G(x-\bar{x})\|^2
+ \|\nu^*\|^2 \|F(y-\bar{y})\|^2\Big).
\end{align*}
Next, utilizing our assumption that there exist constants $\xi_1,\xi_2,\xi_3,\xi_4 >0$ such that $\xi_1 C_1^\top C_1\succeq  A^\top A$, $\xi_2 C_1^\top C_1\succeq \norm{\lambda^*}^2 G^\top G$, $\xi_3 C_2^\top C_2\succeq A A^\top$, and $\xi_4 C_2^\top C_2\succeq \norm{\nu^*}^2 F F^\top$, we conclude that 
\begin{align*}
    \norm{z-\bar z}^2\leq \theta^2(D,J) (\max\{ 1+\xi_1+\xi_2,1+\xi_3+\xi_4 \})\Big( \norm{C_1 (x-\bar{x})}^2+\norm{C_2 (y-\bar{y}}^2 \Big),
\end{align*}
which, after re-arranging the terms we obtain,
\begin{equation}\label{eq:C-block-lb}
\|C_1(x-\bar x)\|^2+\|C_2(y-\bar y)\|^2
\;\ge\;
\frac{\|z-\bar z\|^2}{\theta^2(D,J)\,\max\{1+\xi_1+\xi_2,\,1+\xi_3+\xi_4\}}.
\end{equation}

Now, from $\kappa_x,\kappa_y$-strong convexity of $h,g$ we have that
\begin{subequations}\label{eq:sc-C1-C2}
\begin{align}
    &\kappa_x\norm{C_1x-C_1\bar{x}}^2\leq \langle \nabla h(C_1x)-\nabla h(C_1\bar{x}),C_1x-C_1\bar{x}\rangle, \label{eq:sc-C1-x}\\
    &\kappa_y\norm{C_2y-C_2\bar{y}}^2\leq \langle \nabla g(C_2y)-\nabla g(C_2\bar{y}),C_2y-C_2\bar{y}\rangle.\label{eq:sc-C2-y}
\end{align}
\end{subequations}
Adding and subtracting $\fprod{A^\top y,x-\bar{x}}$ and $\fprod{A x,y-\bar{y}}$ to the right-hand side of \eqref{eq:sc-C1-x} and \eqref{eq:sc-C2-y} respectively, rearranging the terms, and combining the result with \eqref{eq:C-block-lb} imply the two-sided QGG as follows
\begin{align*}
\frac{2\min\{\kappa_x,\kappa_y\}}{\theta^2(D,J)\max\{(1+\xi_1+\xi_2),(1+\xi_3+\xi_4)\}}\bD_{\cZ}(z,\bar{z}) \leq   \langle F(z)-F(\bar{z}),z-\bar{z}\rangle.
\end{align*}
\eyh{Finally, note that we can replace the definitions of strong convexity in \eqref{eq:sc-C1-C2} with 
\begin{align*}
    \frac{\kappa_x}{2}\norm{C_1x-C_1\bar{x}}^2&\leq  h(C_1 x)-h(C_1\bar{x})-\langle\nabla h(C_1\bar{x}),C_1x-C_1\bar{x}\rangle \\
    &=h(C_1 x)-h(C_1\bar{x})+\langle A^\top \bar{y},x-\bar{x}\rangle,\\
    \frac{\kappa_y}{2}\norm{C_2y-C_2\bar{y}}^2&\leq g(C_2 y)-g(C_2\bar{y})-\langle\nabla g(C_2\bar{y}),C_2y-C_2\bar{y}\rangle\\
    &=g(C_2 y)-g(C_2\bar{y})-\langle A\bar{x},y-\bar{y}\rangle.
\end{align*}
Then, following the same steps by rearranging the terms results in 
\begin{align*}
\frac{\min\{\kappa_x,\kappa_y\}}{\theta^2(D,J)\max\{(1+\xi_1+\xi_2),(1+\xi_3+\xi_4)\}}\bD_{\cZ}(z,\bar{z}) \leq   f(x,\bar{y})-f(\bar{x},y),
\end{align*}}%
which completes the proof.
\end{proof}

\begin{remark}
The conditions \eqref{eq:constraint-matrix-cond} in Proposition~\ref{prop:appl-2qfg-2qgg} on the existence of 
$\xi_1,\xi_2,\xi_3,\xi_4>0$ are necessary to ensure that the curvature induced by 
$C_1$ and $C_2$ dominates the effects of the coupling matrix $A$ and the active 
constraints associated with $(G,F)$. 
These relations guarantee that the number of zero eigenvalues of the matrices 
$(A^\top A,\,AA^\top,\,G^\top G,\,F^\top F)$ does not exceed those of 
$(C_1^\top C_1,\,C_2^\top C_2)$, thereby preventing directions in which the objective 
behaves linearly away from the solution set. In other words, $\xi_1 C_1^\top C_1\succeq A^\top A$ is equivalent to $\text{ker}(C)\subseteq \text{ker}(A)$, hence, in practice, one can select $\xi_1=\|A C_1^\dagger\|_2^2$ where $C^\dagger$ denotes the Moore-Penrose pseudoinverse. The same reasoning applies analogously for the other constants.
% In practice, one can simply select 
% \[
% \xi_1=\|\|,\quad
% \xi_2=\frac{\|G\|_2^2}{\sigma_{\min}^2(C_1)},\quad
% \xi_3=\frac{\|A\|_2^2}{\sigma_{\min}^2(C_2)},\quad
% \xi_4=\frac{\|F\|_2^2}{\sigma_{\min}^2(C_2)}.
% \] 
\end{remark}

\begin{remark}\label{rem:qfg-qgg}
If problem \eqref{eq:constrained-opt2} is unconstrained, i.e., $X=\mathbb R^n$ and $Y=\mathbb R^m$, then the conditions and results of Proposition \ref{prop:appl-2qfg-2qgg} simplifies to the following:  
Suppose there exist $\nu_1,\nu_2 > 0$ such that 
$\nu_1 C_1^\top C_1 \succeq A^\top A$ and 
$\nu_2 C_2^\top C_2 \succeq AA^\top$, then the problem class satisfies two-sided QGG and two-sided QFG with moduli 
\[
\mu_x = \mu_y = 
\frac{\theta^2(D,0)\,\min\{\kappa_x,\kappa_y\}}
     {\max\{\,1+\nu_1,\;1+\nu_2\,\}}, 
\quad \text{where}\quad 
D \triangleq 
\begin{bmatrix}
C_1 & 0 \\
A & 0 \\
0 & C_2 \\
0 & A^\top
\end{bmatrix}.
\]
\end{remark}

\begin{remark}
For the unconstrained version of problem \eqref{eq:constrained-opt2}, the framework also captures a wide class of structured convex composite problems of the form
\begin{equation}\label{eq:opt-comp-convex}
    \min_{x\in\reals^n}~f_1(B_1x)+f_2(B_2x),
\end{equation}
where $f_1:\reals^p\to\reals$ is continuously differentiable and $\kappa_1$-strongly convex, and $f_2:\reals^q\to\reals$ is convex with an $L_{f_2}$-Lipschitz continuous gradient. 
By Fenchel duality, this problem can be equivalently written as
\begin{equation}\label{eq:fenchel-min-max}
    \min_{x\in\reals^n}\max_{y\in\reals^q}\; f_1(B_1x)+\langle B_2x,y\rangle - f_2^*(y),
\end{equation}
where $f_2^*$ denotes the Fenchel conjugate of $f_2$, defined as 
$f_2^*(y)=\max_{v\in\reals^q}\langle v,y\rangle - f_2(v)$.
This representation is a special case of the unconstrained version of \eqref{eq:constrained-opt2} with
$h=f_1$, $g=f_2^*$, $C_1=B_1$, $C_2=I_q$, and $A=B_2$.
Hence, by Remark~\ref{rem:qfg-qgg}, the problem satisfies two-sided QGG and QFG, and Algorithm~\ref{alg:APD} achieves linear convergence.
This holds, for example, when $\nu_2=\|B_2\|_2^2$ and there exists $\nu_1>0$ such that 
$\nu_1 B_1^\top B_1\succeq B_2^\top B_2$.
\end{remark}

\section{Numerical Experiment}\label{numerics}
In this section, we implement our proposed GAPD algorithm for solving a randomly generated saddle point problem and compare it with standard Gradient Descent Ascent (GDA) algorithm in various dimensions.  
Specifically, we consider the following problem:
\begin{equation}\label{eq:quadratic-sp}
    \min_{x \in \mathbb R^n} \max_{y \in \mathbb R^m} 
    \; \tfrac{1}{2}\|C_1 x - b_1\|^2_2 + \langle A x, y\rangle - \tfrac{1}{2}\|C_2 y - b_2\|^2_2,
\end{equation}
where $C_1\in\mathbb R^{p\times n}$, $C_2\in\mathbb R^{q\times m}$, $b_1 \in \mathbb{R}^{p}, b_2 \in \mathbb{R}^{q}$ are generated randomly with elements drawn from standard Gaussian distribution; moreover, matrix $A\in\mathbb R^{n\times m}$ is generated randomly from Gaussian distribution with mean zero and standard deviation $5$. 
We consider $p<n$ and $q<m$ such that $C_1$ and $C_2$ are not full column rank, hence, the objective function is not strongly convex-strongly concave while satisfying two-sided QFG and two-sided QGG. 
% the data matrices are generated with i.i.d.\ Gaussian entries, with 
% \(A \in \mathbb{R}^{240\times 300}\), \(C_1 \in \mathbb{R}^{240\times 300}\), \(C_2 \in \mathbb{R}^{200\times 240}\), and \(b_1 \in \mathbb{R}^{240}, b_2 \in \mathbb{R}^{200}\). Note this is a problem that fits into the broader class of problem considered in Section \ref{classproblems}. The saddle point is obtained by directly solving the associated KKT system 
% \[
% \begin{bmatrix}
% H_1 & A^\top \\
% A   & -H_2
% \end{bmatrix}
% \begin{bmatrix}
% x^\star \\ y^\star
% \end{bmatrix}
% =
% \begin{bmatrix}
% C_1^\top b_1 \\ -C_2^\top b_2
% \end{bmatrix},
% \qquad 
% H_1 = C_1^\top C_1,\; H_2 = C_2^\top C_2.
% \]
We select the step-size of GDA based on the results by \cite{zamani2024convergence}, where a linear convergence rate has been shown. We select the parameters of our proposed method based on Corollary \ref{cor:stepresult} for different values of $\theta\in \{0,0.5,0.99,1\}$. Note that based on these parameter selections, $\theta=0$ and $\theta=1$ correspond to OGDA \cite{mokhtari2020a} and APD \cite{hamedani2021primal}. The performance of the methods is compared in three different dimensions $(n,m,p,q)\in \{(75,60,60,50),(150,120,120,100),(300,240,240,200)\}$ as depicted in the plots from left to right in Figure \ref{fig:gapd_saddle}.

We observe that all methods converge linearly to the optimal solution, while our unified algorithm (GAPD) consistently outperforms GDA across different choices of the parameter $\theta$. The plots further illustrate the influence of $\theta$, with $\theta = 0.99$ achieving the best performance (slightly better than $\theta = 1$). This suggests that there may exist an optimal value of $\theta$ that leads to the fastest convergence; however, identifying such a value by minimizing the theoretical upper bound would require knowledge of certain quantities, such as the true optimal solution, which are not available in practice.
Overall, these results highlight both the flexibility and unifying role of the GAPD framework, as well as its clear advantage over standard methods such as GDA.
%We compare the performance of standard Gradient Descent Ascent (GDA) with our GAPD algorithm under four choices for $\theta: 0,0.5,0.99$ and $1$. Recall $\theta$  can be thought of as interpolating between the Optimistic GDA $(\theta=1)$ and the Accelerated Primal--Dual Hybrid Gradient method $(\theta=0)$. Convergence is evaluated using the normalized gradient residual, which vanishes at the saddle point. The results are shown below,
\begin{figure}[htbp]
    \centering
    \includegraphics[width=0.28\textwidth]{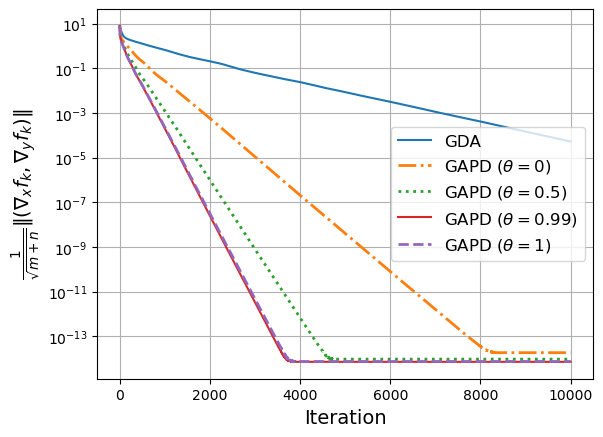}
    \includegraphics[width=0.28\textwidth]{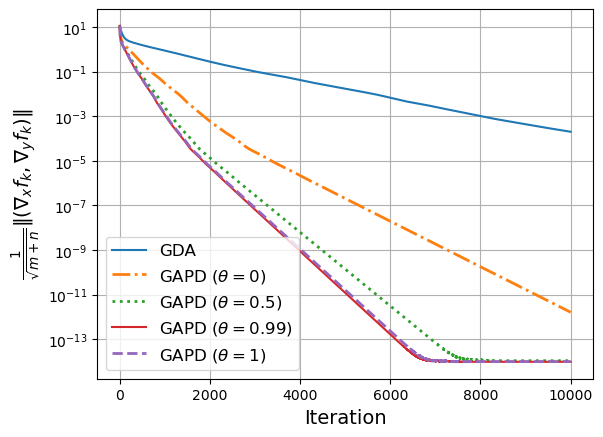}
    \includegraphics[width=0.28\textwidth]{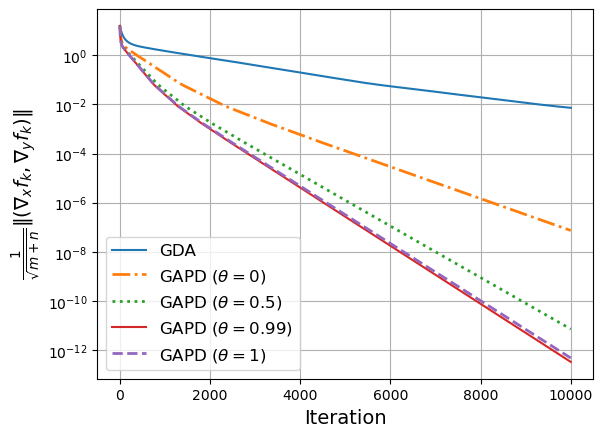}
    \caption{Convergence of GDA and GAPD variants on problem \eqref{eq:quadratic-sp}, 
    measured by the normalized gradient residual. From left to right, the problem dimensions are: $(n,m,p,q)= (75,60,60,50),(150,120,120,100),(300,240,240,200)$.}
    \label{fig:gapd_saddle}
\end{figure}
%\noindent We observe that GDA converges slowly, while all GAPD variants exhibit significantly faster decay of the residual. In particular, the cases $\theta=0.99$ and $\theta=1$ achieve the steepest decrease and nearly overlap, clearly outperforming both GDA and the intermediate settings. These results highlight the unifying and flexible role of the GAPD framework, as well as the advantage of GAPD method has over pre-existing methods like APD and OGDA. 

\section{Conclusion and Future Directions}
In this paper, we explored saddle point (SP) problems with convex-concave structures under relaxed conditions, specifically two-sided QFG and two-sided QGG. These conditions extend the classic notion of strong convexity-strong concavity, enabling the analysis of a broader class of SP problems. We proposed the generalized accelerated primal-dual algorithm, which achieves a linear convergence rate under these relaxed conditions and unifies several existing methods. The theoretical results were complemented with examples of structured SP problems that satisfy two-sided QFG or two-sided QGG, illustrating the practical relevance of the proposed approach.

For future research, we aim to investigate additional examples and the implications of these growth conditions in the context of distributed optimization. Moreover, we plan to analyze the convergence behavior of SP problems under one-sided quadratic growth, exploring their applicability to constrained optimization problems.

%\backmatter

%\bmhead{Supplementary information}

%If your article has accompanying supplementary file/s please state so here. 

%Authors reporting data from electrophoretic gels and blots should supply the full unprocessed scans for key as part of their Supplementary information. This may be requested by the editorial team/s if it is missing.

%Please refer to Journal-level guidance for any specific requirements.

%\bmhead{Acknowledgements}

%Acknowledgements are not compulsory. Where included they should be brief. Grant or contribution numbers may be acknowledged.

%Please refer to Journal-level guidance for any specific requirements.

%\section*{Declarations}

% \begin{itemize}
% \item {\bf Funding:} Not Applicable.
% \item {\bf Conflict of interest/Competing interests:} The authors have no relevant financial or non-financial interests to disclose.
% %\item {\bf Ethics approval and consent to participate:} Not Applicable.
% %\item Consent for publication
% \item {\bf Data, Materials, and code availability:} The basic code of this work is available from the
% corresponding author upon reasonable request.
% %\item Materials availability
% %\item Code availability 
% %\item {\bf Authors contribution:} 
% \end{itemize}

\section*{Appendix}
%\section{Section title of first appendix}\label{secA1}
\section{Proof of Corollary \ref{cor:stepresult}}\label{append}
\eyh{We show that the step-sizes and parameters selection in Corollary \ref{cor:stepresult} satisfy the conditions in \eqref{eq:stepsize-cond}. First note that the parameters are selected as constant, hence, $t_k=\alpha^{-k}$ and $\beta=(1-\theta)\alpha$ satisfies \eqref{eq:stepsize-cond-a}. Moreover, selecting $\tau_k=\tau$ and $\sigma_k=\sigma$ we observe that \eqref{eq:stepsize-cond-b} is equivalent to $(1-\alpha)/\tau\leq \alpha\varsigma \mu_x/2$ and $(1-\alpha)/\sigma\leq \alpha\varsigma \mu_y/2$ from which are satisfied by selecting $\tau=\frac{2(1-\alpha)}{\alpha\varsigma \mu_x}$ and $\sigma=\frac{2(1-\alpha)}{\alpha\varsigma \mu_y}$.} 

\eyh{Next, we focus on showing the condition \eqref{eq:stepsize-cond-c}. Using the definition $\Lambda_k$ and Young's inequality for any $\Gamma\succ 0$, we observe that }
% In this section, we will explicitly derive the conditions on our parameters necessary to guarantee the convergence derived in the paper. 
% We assume:
% \begin{align*}
%      &t_k(1-\theta)=t_{k+1}\beta_,\quad t_k=t_{k+1}\alpha\\
%         &t_{k+1}/\tau\leq t_k(\frac{1}{\tau}+ (1-\theta)\frac{\mu_x}{2}),\quad t_{k+1}/\sigma\leq t_k(\frac{1}{\sigma}+  (1-\theta)\frac{\mu_y}{2})
% \end{align*}
% which we can solve
% \begin{align*}
%      &\beta=\alpha(1-\theta),\quad t_k=\alpha^{-k}\\
%         &\tau=\frac{2(1-\alpha)}{\alpha\mu_x (1-\theta)},\quad \sigma=\frac{2(1-\alpha)}{\alpha\mu_y (1-\theta_y)}
% \end{align*}
% We would like to determine the conditions that ensure the following bound holds:
% \begin{align*}
%  \norm{q_{k+1}}_{\cZ^*}\norm{z_{k+1}- z_k}_{\widetilde{C}_k}+\Lambda_k-\frac{1}{2}\norm{q_k}^2_{\Gamma^{-1}B_k}+\frac{t_{k+1}}{2t_k}\norm{q_{k+1}}^2_{\Gamma^{-1}B_{k+1}}\leq 0
% \end{align*}
% Expanding,
\begin{align}\label{eq:third-step-upper-bound}
    &\norm{q_{k+1}}_{\cZ^*}\norm{z_{k+1} - z_k}_{\widetilde{C}_k} + \Lambda_k - \frac{1}{2}\norm{q_k}^2_{\Gamma^{-1}B_k} + \frac{t_{k+1}}{2t_k}\norm{q_{k+1}}^2_{\Gamma^{-1}B_{k+1}} \nonumber\\
    &\leq \frac{1}{2}\norm{q_{k+1}}_{\Gamma^{-1}\widetilde{C}_k}^2 + \frac{1}{2}\norm{z_{k+1} - z_k}_{\Gamma\widetilde{C}_k}^2 + \frac{1}{2}\norm{q_k}^2_{\Gamma^{-1}B_k} + \frac{1}{2}\norm{z_{k+1} - z_k}^2_{\Gamma B_k} \nonumber\\
    &\quad - \bD_\cZ^{D_k}(z_{k+1}, z_k) - \frac{1}{2}\norm{q_k}^2_{\Gamma^{-1}B_k} + \frac{1}{2}\norm{q_{k+1}}^2_{\Gamma^{-1}B_{k+1}} \nonumber\\
    &= \frac{1}{2}\norm{q_{k+1}}_{\Gamma^{-1}(\widetilde{C}_k + B_{k+1})}^2 + \frac{1}{2}\norm{z_{k+1} - z_k}^2_{\Gamma(\widetilde{C}_k + B_k)} - \bD_\cZ^{D_k}(z_{k+1}, z_k),
\end{align}
%  where the last inequality holds if $2E+\Gamma(\widetilde{C}+B)-D\preceq 0$ where
% \[E\triangleq \diag{[(L_{xx}^2\gamma_x^{-1}((1-\theta)^2L_{\psi_\cX}^2+\beta)+L_{xy}^2\gamma_y^{-1}(L_{\psi_\cY}^2+\alpha))\bI_n,(L_{yx}^2\gamma_x^{-1}((1-\theta)^2L_{\psi_\cX}^2+\beta)+L_{yy}^2\gamma_y^{-1}(L_{\psi_\cY}^2+\alpha))\bI_m]}\]
% which implies 
% \begin{align}
%     &E_x+\gamma_x((1-\theta)^2L_{\psi_\cX}^2+(1-\theta)\alpha)-(\frac{\alpha\mu_x (1-\theta)}{2(1-\alpha)}-\theta L_{xx})\leq 0\\
%     &E_y+\gamma_y(L_{\psi_\cY}^2+\alpha)-(\frac{\alpha \mu_y (1-\theta)}{2(1-\theta)})\leq 0
% \end{align}
% Combining these two definitions we get,
% \begin{align*}
%     \frac{1}{2}\norm{q_{k+1}}_{\Gamma^{-1}(\widetilde{C}_k+B_{k+1})}^2+\frac{1}{2}\norm{z_{k+1}-z_k}^2_{\Gamma (\widetilde{C}_k+B_k)}-\frac{1}{2}(\frac{1}{\tau_k}-\theta_k L_{xx})\norm{x_{k+1}-x_k}^2_{\mathcal{X}}-\frac{1}{2}\frac{1}{\sigma_k}\norm{y_{k+1}-y_k}^2_{\mathcal{Y}}
% \end{align*}
\eyh{where we recall that $\widetilde{C}_k \triangleq \text{diag}\left(\{(1-\theta_k)^2 L_{\psi_\cX}^2\bI_n, L_{\psi_\cY}^2\bI_m\}\right)$ and $B_k \triangleq \text{diag}\left(\{\beta_k \bI_n, \alpha_k\bI_m\}\right)$. Let $\Gamma=\text{diag}(\gamma_x\bI_n,\gamma_y\bI_m)$ for some $\gamma_x,\gamma_y>0$ and using the definition of $q_k$, Assumption \ref{assumption:convex-concave-smoothness}, and the fact that $(a+b)^2 \leq 2a^2 + 2b^2$, we obtain}  
% \begin{align*}
%     \frac{1}{2}\norm{q_{k+1}-q_k}^2 = \frac{1}{2}\Big(\norm{\nabla_x f(x_{k+1},y_{k+1}) -\nabla_x f(x_{k},y_{k}) }^2 + 
%   \norm{\nabla_y f(x_{k+1},y_{k+1}) -\nabla_y f(x_{k},y_{k}) }^2\Big)
% \end{align*}
% it follows that
\begin{align*}
\frac{1}{2}\norm{q_{k+1}}^2_{\Gamma^{-1} (\widetilde{C_k}+B_{k+1})} 
&\leq \frac{(1-\theta)^2 L^2_{\psi_x} + \beta}{\gamma_x}
\left(L_{xx}^2\norm{x_{k+1}-x_{k}}^2_{\mathcal{X}} + L_{xy}^2\norm{y_{k+1}-y_{k}}^2_{\mathcal{Y}} \right) \\
&\quad + \frac{L_{\psi_y}^2 + \alpha}{\gamma_y}\left(L_{yy}^2\norm{y_{k+1}-y_{k}}^2_{\mathcal{Y}} + L_{yx}^2\norm{x_{k+1}-x_{k}}^2_{\mathcal{X}} \right).
\end{align*}
\eyh{Therefore, using the above inequality within \eqref{eq:third-step-upper-bound} and expanding the terms, recalling that we defined $D_k \triangleq \diag\left(\left(\frac{1}{\tau_k} - \theta_k L_{xx}\right)\bI_n, \frac{1}{\sigma_k}\bI_m\right)$, we conclude that \eqref{eq:stepsize-cond-c} holds if $C_x\|x_{k+1} - x_k\|_{{\mathcal{X}}}^2+C_y\|y_{k+1} - y_k\|_{{\mathcal{Y}}}^2\leq 0$ where
\begin{subequations}\label{eq:step-cond-C}
\begin{align}
    C_x\triangleq &\gamma_x^{-1} \left((1-\theta)^2 L^2_{\psi_x} + \beta\right)L^2_{xx} + {\gamma_y^{-1}} (L^2_{\psi_y} + \alpha)L^2_{yx} \\
& + \frac{\gamma_x}{2} \left((1-\theta)^2 L^2_{\psi_x} + \beta\right) - \frac{1}{2}\left(\frac{1}{\tau} - \theta L_{xx}\right), \nonumber \\
 C_y\triangleq & {\gamma_x^{-1}} \left((1-\theta)^2 L^2_{\psi_x} + \beta\right)L^2_{xy} + \gamma_y^{-1} (L^2_{\psi_y} + \alpha)L^2_{yy} + \frac{\gamma_y}{2} (L^2_{\psi_y} + \alpha) - \frac{1}{2\sigma}.
\end{align}
\end{subequations}
Therefore, to ensure $C_x\leq 0$ and $C_y\leq 0$, replacing $\tau=\frac{2(1-\alpha)}{\alpha\varsigma\mu_x}$, $\sigma=\frac{2(1-\alpha)}{\alpha\varsigma\mu_y}$, $\beta=(1-\theta)\alpha$, and rearranging the terms lead to two quadratic inequalities for the primal and dual step-sizes in terms of $\alpha$. Here we only focus on one of them, i.e., $C_y\leq 0$, and the analysis for the other condition can be derived similarly. Therefore, noting that $(1-\theta)^2\leq 1-\theta$ and defining $L_\psi\triangleq \max\{L_{\psi_x},L_{\psi_y}\}$, we have that $C_y\leq 0$ holds if
\begin{equation}\label{eq:quadratic-alpha-y}
    \mathfrak{A}_y\alpha^2+\mathfrak{B}_y\alpha-\mathfrak{C}_y\geq 0,    
\end{equation}
where} 
\begin{align*}
    &\mathfrak{A}_y \triangleq \frac{2L^2_{xy}(1-\theta)}{{\gamma_x}}+ \frac{2L^2_{yy}}{\gamma_y}+\gamma_y, \\
    &\mathfrak{B}_y \triangleq -\frac{2L^2_{xy} (1-\theta)}{{\gamma_x}}-\frac{2L^2_{yy}}{\gamma_y}-\gamma_y +\frac{2}{{\gamma_x}}L^2_{xy}(1-\theta) L^2_{\psi}\eyh{+\frac{2}{\gamma_y}L^2_{yy}L^2_{\psi}}+\gamma_y L^2_{\psi}+\frac{\eyh{\varsigma}\mu_y}{2},\\
    &\mathfrak{C}_y \triangleq \frac{2L^2_{xy} L^2_{\psi}\eyh{(1-\theta)}}{{\gamma_x}}+\frac{2}{\gamma_y}L^2_{yy} L^2_{\psi}+\gamma_y L^2_{\psi}.
\end{align*}
We can select the free parameters $\gamma_x,\gamma_y>0$ by optimizing the terms in $\mathfrak{A}_y$ (and the corresponding values in $C_x$). More specifically, one can select $\gamma^*_x = \sqrt{2L^2_{xy}+ 2L^2_{xx}}$ and $\gamma^*_y = \sqrt{2L^2_{yx} + 2L^2_{yy}}$.
%We note that $\mathfrak{B}_y=-\mathfrak{A}_y+\mathfrak{C}_y+\frac{\eyh{\varsigma}\mu_y}{2}$ and $\mathfrak{C}_y=L_\psi^2\mathfrak{A}_y$. 
Noting that $\mathfrak{C}_y=L_\psi^2\mathfrak{A}_y$, the quadratic \eqref{eq:quadratic-alpha-y} holds if we select $\alpha \geq \alpha^*_y\triangleq   \frac{-\mathfrak{B}_y + \sqrt{\mathfrak{B}_y^2 + 4L_\psi^2\mathfrak{A}_y^2}}{2\mathfrak{A}_y}$. From the fact that $-a+\sqrt{1+t^2a^2}\geq 1-a$ for any $a\geq 0$ and $t\geq 1$ and that $L_\psi\geq 1$, we conclude that $\alpha^*_y\geq 1-\frac{\mathfrak{B}_y}{2\mathfrak{A}_y}=1-\frac{L_\psi^2-1}{2}-\frac{\varsigma \mu_y}{4\mathfrak{A}}=\Omega(1-\frac{\varsigma\mu_y}{L_{xy}+L_{yy}})$. Similarly, we can show that $\alpha\geq \alpha^*_x=\Omega(1-\frac{\varsigma \mu_x}{L_{yx}+L_{xx}})$. Hence, selecting $\alpha=\max\{\alpha^*_x,\alpha^*_y\}$. 
Finally, from \eqref{eq:step-cond-C} one can readily observe that $A_k-\Gamma_kB_k=\text{diag}\left((\frac{1}{\tau}-\gamma_x\beta)\bI_n,(\frac{1}{\sigma}-\gamma_y\alpha)\bI_m\right)\succeq \text{diag}\left((\theta L_{xx}+\sqrt{2L_{xx}^2+2L_{yx}^2})\bI_n,(\sqrt{2L_{xy}^2+2L_{yy}^2})\bI_m\right)$.\qed

\section{Proof of Descent Inequality for Dual Iterates}\label{appendb}
\begin{lemma}[Descent inequality for the \( y_{k+1} \) update]\label{lem:descent_y}
Let \( (x_k, y_k) \) be the iterates generated by Algorithm~\ref{alg:APD}. Then for any \( y \in \mathcal{Y} \), the following inequality holds:
\begin{align}
\langle \nabla_y f_{k+1}, y - y_{k+1} \rangle 
&\leq \langle \nabla_y f_{k+1} - \nabla_y f_k, y - y_{k+1} \rangle 
+ \alpha_k \langle q_k^y, y_k - y \rangle 
+ \alpha_k \langle q_k^y, y_{k+1} - y_k \rangle \notag \\
&\quad + \frac{1}{\sigma_k} \left[ \bD_{\mathcal{Y}}(y, y_k) - \bD_{\mathcal{Y}}(y, y_{k+1}) - \bD_{\mathcal{Y}}(y_{k+1}, y_k) \right].
\end{align}
\end{lemma}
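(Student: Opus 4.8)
The plan is to mirror, on the dual side, the argument that produced inequality~\eqref{eq:sc-x} for the primal update. Recall that $y_{k+1}$ minimizes over $Y$ the function $y\mapsto -\langle \nabla_y f(x_k,y_k)+\alpha_k q_k^y,\, y\rangle + \tfrac{1}{\sigma_k}\bD_\cY(y,y_k)$. I would set $g(y)\triangleq -\langle \nabla_y f_k+\alpha_k q_k^y,\, y\rangle + \ind{Y}(y)$, which is a closed convex function with convexity modulus $\mu=0$ with respect to $\|\cdot\|_\cY$, take $t=1/\sigma_k$, $\bar x = y_k$, $x^+ = y_{k+1}$, and apply Lemma~\ref{argminineq}. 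Since the $\tfrac{\mu}{2}\|\cdot\|^2$ term vanishes, this gives directly, for every $y\in Y$,
\[
\langle \nabla_y f_k+\alpha_k q_k^y,\; y-y_{k+1}\rangle \;\le\; \tfrac{1}{\sigma_k}\big[\bD_\cY(y,y_k)-\bD_\cY(y,y_{k+1})-\bD_\cY(y_{k+1},y_k)\big].
\]

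Next I would introduce the consecutive-gradient difference by adding $\langle \nabla_y f_{k+1}-\nabla_y f_k,\; y-y_{k+1}\rangle$ to both sides; the left-hand side then becomes $\langle \nabla_y f_{k+1}+\alpha_k q_k^y,\; y-y_{k+1}\rangle$, while the added term reappears on the right as $\langle \nabla_y f_{k+1}-\nabla_y f_k,\; y-y_{k+1}\rangle$. Finally I would move the momentum term to the right and split it as $-\alpha_k\langle q_k^y,\; y-y_{k+1}\rangle = \alpha_k\langle q_k^y,\; y_k-y\rangle + \alpha_k\langle q_k^y,\; y_{k+1}-y_k\rangle$, which produces exactly the two $q_k^y$ inner products in the claimed bound. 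Collecting terms yields the stated inequality.

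There is no real obstacle here: the lemma is a verbatim dual analogue of the derivation leading to \eqref{eq:sc-x} and \eqref{eq:sc-y}, obtained by specializing Lemma~\ref{argminineq} (which already packages the three-point Bregman identity) and then regrouping. The only points deserving a word of care are (i) modeling the constrained proximal step by absorbing the feasible set into $g$ via the indicator $\ind{Y}$, so that $\operatorname{dom} g = Y$ and $y_k\in\operatorname{dom} g$, and (ii) noting that the relevant convexity modulus is $\mu=0$ because $g$ is linear plus an indicator, so no residual quadratic term survives in the conclusion.
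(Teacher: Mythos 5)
Your proposal is correct and follows essentially the same route as the paper's own proof: apply Lemma~\ref{argminineq} to the proximal characterization of $y_{k+1}$ (with the feasible set absorbed into $g$ and $\mu=0$), then add and subtract $\nabla_y f_{k+1}$ and split the momentum inner product through $y_k$. The sign bookkeeping and the final regrouping both check out, so nothing further is needed.
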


\begin{proof}
We apply Lemma~\ref{argminineq} to updates for $y_{k+1}$ to get,
\[
\langle \nabla_y f(x_k,y_k)+\alpha_k q_k^y,y-y_{k+1}\rangle \leq \frac{1}{\sigma_k}\Big[ \bD_{\mathcal{Y}}(y,y_k)-\bD_{\mathcal{Y}}(y_{k+1},y_k)-\bD_{\mathcal{Y}}(y,y_{k+1}) \Big],
\]
then expanding the inner product, we obtain
\[
\langle \nabla_y f(x_k,y_k),y-y_{k+1}\rangle +\alpha_k\langle q_k,y-y_{k+1}\rangle \leq \frac{1}{\sigma_k}\Big[ \bD_{\mathcal{Y}}(y,y_k)-\bD_{\mathcal{Y}}(y_{k+1},y_k)-\bD_{\mathcal{Y}}(y,y_{k+1}) \Big].
\]
Adding and subtracting $\nabla_yf_{k+1}$ to the left-hand side of the above inequality and expanding the terms leads to
\begin{align}
&\langle \nabla_y f(x_{k+1}, y_{k+1}),\; y - y_{k+1} \rangle 
+ \langle \nabla_y f(x_k, y_k) - \nabla_y f(x_{k+1}, y_{k+1}),\; y - y_{k+1} \rangle 
\notag \\
&+ \alpha_k \langle q_k,\; y - y_{k+1} \rangle \leq \frac{1}{\sigma_k} \left[ \bD_{\mathcal{Y}}(y, y_k) - \bD_{\mathcal{Y}}(y_{k+1}, y_k) - \bD_{\mathcal{Y}}(y, y_{k+1}) \right]. 
\end{align}
Finally, add and subtract $y_k$ in the inner product, followed by expansion and rearrangement,
\begin{align}
&\langle \nabla_y f(x_{k+1}, y_{k+1}),\; y - y_{k+1} \rangle \nonumber \\
&\leq \langle \nabla_y f(x_{k+1}, y_{k+1})-\nabla_y f(x_k, y_k),\; y - y_{k+1} \rangle \notag \\
&\quad + \alpha_k \langle q_k,\; y_k - y \rangle 
       + \alpha_k \langle q_k,\; y_{k+1} - y_k \rangle \notag \\
&\quad +\frac{1}{\sigma_k} \left[ \bD_{\mathcal{Y}}(y, y_k) - \bD_{\mathcal{Y}}(y_{k+1}, y_k) - \bD_{\mathcal{Y}}(y, y_{k+1}) \right], 
\end{align}
which completes the proof.
%\todo{double check after editing rest on the above page break on the equation.}
\end{proof}
%\end{appendices}

%%===========================================================================================%%
%% If you are submitting to one of the Nature Portfolio journals, using the eJP submission   %%
%% system, please include the references within the manuscript file itself. You may do this  %%
%% by copying the reference list from your .bbl file, paste it into the main manuscript .tex %%
%% file, and delete the associated \verb+\bibliography+ commands.                            %%
%%===========================================================================================%%

\bibliographystyle{plain}
\bibliography{sn-bibliography}

\begin{thebibliography}{10}

\bibitem{pmlr-v70-arjovsky17a}
Martin Arjovsky, Soumith Chintala, and L{\'e}on Bottou.
\newblock Wasserstein generative adversarial networks.
\newblock In {\em International conference on machine learning}, pages 214--223. PMLR, 2017.

\bibitem{arrow1958studies}
Kenneth~J. Arrow, Leonid Hurwicz, and Hirofumi Uzawa.
\newblock {\em Studies in Linear and Non-linear Programming}.
\newblock Stanford University Press, Stanford, CA, 1958.

\bibitem{artacho2008characterization}
FJ~Arag{\'o}n Artacho and Michel~H Geoffroy.
\newblock Characterization of metric regularity of subdifferentials.
\newblock {\em Journal of Convex Analysis}, 15(2):365, 2008.

\bibitem{azizian2020tight}
Wa{\"\i}ss Azizian, Ioannis Mitliagkas, Simon Lacoste-Julien, and Gauthier Gidel.
\newblock A tight and unified analysis of gradient-based methods for a whole spectrum of differentiable games.
\newblock In {\em International conference on artificial intelligence and statistics}, pages 2863--2873. PMLR, 2020.

\bibitem{bacsar1998dynamic}
Tamer Ba{\c{s}}ar and Geert~Jan Olsder.
\newblock {\em Dynamic Noncooperative Game Theory}.
\newblock SIAM, Philadelphia, PA, 1998.

\bibitem{bertsekas2009convex}
Dimitri Bertsekas.
\newblock {\em Convex optimization theory}, volume~1.
\newblock Athena Scientific, 2009.

\bibitem{chambolle2011first}
Antonin Chambolle and Thomas Pock.
\newblock A first-order primal-dual algorithm for convex problems with applications to imaging.
\newblock {\em Journal of Mathematical Imaging and Vision}, 40(1):120--145, 2011.

\bibitem{chambolle2016ergodic}
Antonin Chambolle and Thomas Pock.
\newblock On the ergodic convergence rates of a first-order primal--dual algorithm.
\newblock {\em Mathematical Programming}, 159(1-2):253--287, 2016.

\bibitem{chen2014optimal}
Yunmei Chen, Guanghui Lan, and Yuyuan Ouyang.
\newblock Optimal primal-dual methods for a class of saddle point problems.
\newblock {\em SIAM Journal on Optimization}, 24(4):1779--1814, 2014.

\bibitem{condat2013primal}
Laurent Condat.
\newblock A primal--dual splitting method for convex optimization involving lipschitzian, proximable and linear composite terms.
\newblock {\em Journal of Optimization Theory and Applications}, 158(2):460--479, 2013.

\bibitem{daskalakis2017training}
Constantinos Daskalakis, Andrew Ilyas, Vasilis Syrgkanis, and Haoyang Zeng.
\newblock Training gans with optimism.
\newblock {\em arXiv preprint arXiv:1711.00141}, 2017.

\bibitem{drusvyatskiy2015quadratic}
Dmitriy Drusvyatskiy and Alexander~D Ioffe.
\newblock Quadratic growth and critical point stability of semi-algebraic functions.
\newblock {\em Mathematical Programming}, 153(2):635--653, 2015.

\bibitem{drusvyatskiy2013tilt}
Dmitriy Drusvyatskiy and Adrian~S Lewis.
\newblock Tilt stability, uniform quadratic growth, and strong metric regularity of the subdifferential.
\newblock {\em SIAM Journal on Optimization}, 23(1):256--267, 2013.

\bibitem{drusvyatskiy2018error}
Dmitriy Drusvyatskiy and Adrian~S Lewis.
\newblock Error bounds, quadratic growth, and linear convergence of proximal methods.
\newblock {\em Mathematics of operations research}, 43(3):919--948, 2018.

\bibitem{du2018linear}
Simon~S Du and Wei Hu.
\newblock Linear convergence of the primal-dual gradient method for convex-concave saddle point problems without strong convexity.
\newblock {\em arXiv preprint arXiv:1802.01504}, 2018.

\bibitem{faechinei2003finite}
F~Faechinei and JS~Pang.
\newblock Finite-dimensional variational inequalities and complementarity problems.
\newblock {\em New York: Springer-Verlag}, 2003.

\bibitem{goodfellow2014generativeadversarialnetworks}
Ian~J. Goodfellow, Jean Pouget-Abadie, Mehdi Mirza, Bing Xu, David Warde-Farley, Sherjil Ozair, Aaron Courville, and Yoshua Bengio.
\newblock Generative adversarial networks, 2014.

\bibitem{grimmer2019convergence}
Benjamin Grimmer.
\newblock Convergence rates for deterministic and stochastic subgradient methods without lipschitz continuity.
\newblock {\em SIAM Journal on Optimization}, 29(2):1350--1365, 2019.

\bibitem{hamedani2021primal}
Erfan~Yazdandoost Hamedani and Necdet~Serhat Aybat.
\newblock A primal-dual algorithm with line search for general convex-concave saddle point problems.
\newblock {\em SIAM Journal on Optimization}, 31(2):1299--1329, 2021.

\bibitem{hoffman1952approximate}
A.~J. Hoffman.
\newblock On approximate solutions of systems of linear inequalities.
\newblock {\em Journal of Research of the National Bureau of Standards}, 49(4):263--265, 1952.

\bibitem{hong2017linear}
Mingyi Hong and Zhi-Quan Luo.
\newblock On the linear convergence of the alternating direction method of multipliers.
\newblock {\em Mathematical Programming}, 162(1):165--199, 2017.

\bibitem{jiang2022generalized}
Ruichen Jiang and Aryan Mokhtari.
\newblock Generalized optimistic methods for convex-concave saddle point problems.
\newblock {\em arXiv preprint arXiv:2202.09674}, 2022.

\bibitem{Jin2025-go}
Qinian Jin.
\newblock {\em Lectures on nonsmooth optimization}.
\newblock Springer Nature, Cham, Switzerland, July 2025.

\bibitem{khaled2020better}
Ahmed Khaled and Peter Richt{\'a}rik.
\newblock Better theory for sgd in the nonconvex world.
\newblock {\em arXiv preprint arXiv:2002.03329}, 2020.

\bibitem{klatte2020hoffman}
Diethard Klatte.
\newblock Hoffman’s error bound for systems of convex inequalities.
\newblock In {\em Mathematical programming with data perturbations}, pages 185--199. CRC Press, 2020.

\bibitem{korpelevich1976extragradient}
Galina~M Korpelevich.
\newblock The extragradient method for finding saddle points and other problems.
\newblock {\em Matecon}, 12:747--756, 1976.

\bibitem{kotsalis2022simple}
Georgios Kotsalis, Guanghui Lan, and Tianjiao Li.
\newblock Simple and optimal methods for stochastic variational inequalities, i: operator extrapolation.
\newblock {\em SIAM Journal on Optimization}, 32(3):2041--2073, 2022.

\bibitem{meng2020aug}
Min Meng and Xiuxian Li.
\newblock Aug-pdg: Linear convergence of convex optimization with inequality constraints.
\newblock {\em arXiv preprint arXiv:2011.08569}, 2020.

\bibitem{mokhtari2020b}
Aryan Mokhtari, Asuman Ozdaglar, and Sarath Pattathil.
\newblock A unified analysis of extra-gradient and optimistic gradient methods for saddle point problems: Proximal point approach.
\newblock In {\em International Conference on Artificial Intelligence and Statistics}, pages 1497--1507. PMLR, 2020.

\bibitem{mokhtari2020a}
Aryan Mokhtari, Asuman~E Ozdaglar, and Sarath Pattathil.
\newblock Convergence rate of o(1/k) for optimistic gradient and extragradient methods in smooth convex-concave saddle point problems.
\newblock {\em SIAM Journal on Optimization}, 30(4):3230--3251, 2020.

\bibitem{monteiro2012iteration}
Renato~DC Monteiro and Benar~F Svaiter.
\newblock Iteration-complexity of a newton proximal extragradient method for monotone variational inequalities and inclusion problems.
\newblock {\em SIAM Journal on Optimization}, 22(3):914--935, 2012.

\bibitem{monteiro2011complexity}
Renato~DC Monteiro and Benar~Fux Svaiter.
\newblock Complexity of variants of tseng's modified fb splitting and korpelevich's methods for hemivariational inequalities with applications to saddle-point and convex optimization problems.
\newblock {\em SIAM Journal on Optimization}, 21(4):1688--1720, 2011.

\bibitem{necoara2019linear}
Ion Necoara, Yu~Nesterov, and Francois Glineur.
\newblock Linear convergence of first order methods for non-strongly convex optimization.
\newblock {\em Mathematical Programming}, 175:69--107, 2019.

\bibitem{nemirovski2004prox}
Arkadi Nemirovski.
\newblock Prox-method with rate of convergence $\mathcal{O}(1/t)$ for variational inequalities with lipschitz continuous monotone operators and smooth convex-concave saddle point problems.
\newblock {\em SIAM Journal on Optimization}, 15(1):229--251, 2004.

\bibitem{nesterov2007dual}
Yurii Nesterov.
\newblock Dual extrapolation and its applications to solving variational inequalities and related problems.
\newblock {\em Mathematical Programming}, 109(2):319--344, 2007.

\bibitem{nesterov2006solving}
Yurii Nesterov and Laura Scrimali.
\newblock Solving strongly monotone variational and quasi-variational inequalities.
\newblock Technical Report CORE Discussion Paper 2006/107, CORE, Université catholique de Louvain, 2006.
\newblock Available at SSRN: \url{https://ssrn.com/abstract=970903}.

\bibitem{nesterov2011solving}
Yurii Nesterov and Laura Scrimali.
\newblock Solving strongly monotone variational and quasi-variational inequalities.
\newblock {\em Discrete and Continuous Dynamical Systems}, 31(4):1383--1396, 2011.

\bibitem{popov1980modification}
Leonid~Denisovich Popov.
\newblock A modification of the arrow-hurwitz method of search for saddle points.
\newblock {\em Mat. Zametki}, 28(5):777--784, 1980.

\bibitem{sion1958general}
Maurice Sion.
\newblock On general minimax theorems.
\newblock {\em Pacific Journal of Mathematics}, 8(1):171--176, 1958.

\bibitem{Tseng08_1J}
P.~Tseng.
\newblock On accelerated proximal gradient methods for convex-concave optimization.
\newblock Available at \url{http://www.mit.edu/~dimitrib/PTseng/papers/apgm.pdf}, 2008.

\bibitem{tseng1995linear}
Paul Tseng.
\newblock On linear convergence of iterative methods for the variational inequality problem.
\newblock {\em Journal of Computational and Applied Mathematics}, 60(1-2):237--252, 1995.

\bibitem{tseng2000modified}
Paul Tseng.
\newblock A modified forward-backward splitting method for maximal monotone mappings.
\newblock {\em SIAM Journal on Control and Optimization}, 38(2):431--446, 2000.

\bibitem{yang2024variance}
Zhichun Yang, Fu-quan Xia, and Kai Tu.
\newblock Variance reduced moving balls approximation method for smooth constrained minimization problems.
\newblock {\em Optimization Letters}, 18(5):1253--1271, 2024.

\bibitem{yu2022fast}
Yaodong Yu, Tianyi Lin, Eric~V Mazumdar, and Michael Jordan.
\newblock Fast distributionally robust learning with variance-reduced min-max optimization.
\newblock In {\em International Conference on Artificial Intelligence and Statistics}, pages 1219--1250. PMLR, 2022.

\bibitem{zamani2024convergence}
Moslem Zamani, Hadi Abbaszadehpeivasti, and Etienne de~Klerk.
\newblock Convergence rate analysis of the gradient descent--ascent method for convex--concave saddle-point problems.
\newblock {\em Optimization Methods and Software}, 39(5):967--989, 2024.

\end{thebibliography}

\end{document}